\definecolor{linkred}{rgb}{0.75,0,0}
\definecolor{linkblue}{rgb}{0,0,0.75}
\theoremstyle{plain}
\newtheorem{theorem}{Theorem}
\newtheorem{proposition}{Proposition}[section]
\newtheorem{lemma}[proposition]{Lemma}
\newcommand{\bt}{\begin{theorem}}
\newcommand{\et}{\end{theorem}}
\theoremstyle{definition}
\newtheorem{definition}[proposition]{Definition}
\newcommand{\beq}{\begin{equation}}
\newcommand{\eeq}{\end{equation}}
\newcommand{\cal}{\mathcal}
\newcommand{\Res}{\mathop{\,\rm Res\,}}
\newcommand{\cc}{\mathcal{C}}
\newcommand{\cd}{\mathcal{D}}
\newcommand{\cl}{\mathcal{L}}
\newcommand{\ce}{\mathcal{E}}
\newcommand\cn{\mathcal{N}}
\newcommand{\bc}{\mathbb{C}}
\newcommand{\bk}{\mathbb{K}}
\newcommand{\bn}{\mathbb{N}}
\newcommand{\bq}{\mathbb{Q}}
\newcommand{\br}{\mathbb{R}}
\newcommand{\bz}{\mathbb{Z}}
\newcommand{\WP}{^{WP}\hspace{-.4mm}}
\newcommand{\modm}{\cal M}
\newcommand{\pd}[2]{\frac{\partial#1}{\partial#2}}
\begin{document}
	
\title[Super volumes and KdV tau functions]{Super volumes and KdV tau functions}
\author{Alexander Alexandrov}
\address{Center for Geometry and Physics, Institute for Basic Science (IBS), Pohang 37673, Korea}
\email{\href{mailto:alex@ibs.re.kr}{alex at ibs.re.kr}}
\author{Paul Norbury}
\address{School of Mathematics and Statistics, University of Melbourne, VIC 3010, Australia}
\email{\href{mailto:norbury@unimelb.edu.au}{norbury at unimelb.edu.au}}
\thanks{}
\subjclass[2020]{14H10; 14D23; 32G15}
\date{\today}

\begin{abstract} 
Weil-Petersson volumes of the moduli space of curves are deeply related to the Kontsevich-Witten KdV tau function.  They possess a Virasoro symmetry which comes out of recursion relations between the volumes due to Mirzakhani.  Similarly, the super Weil-Petersson volumes of the moduli space of super curves with Neveu-Schwarz punctures are related to the Br\'ezin-Gross-Witten (BGW) tau function of the KdV hierarchy and satisfy a recursion due to Stanford and Witten, analogous to Mirzakhani's recursion.  In this paper we prove that by also allowing Ramond punctures, the super Weil-Petersson volumes are related to the {\em generalised} BGW KdV tau function, which is a one parameter deformation of the BGW tau function.  This allows us to prove that these new super volumes also satisfy the Stanford-Witten recursion.
  \end{abstract}

\maketitle

\tableofcontents

\section{Introduction}

The Weil-Petersson volumes of the moduli spaces of genus $g$ curves with $n$ marked points $\modm_{g,n}$ play an important role in both pure mathematics and physics.  They deliver reciprocal benefits to both mathematics and physics by creating connections between the two.  Their relationship to two-dimensional quantum gravity leads to integrability of partition functions constructed from the volumes.  This has deep consequences in algebraic geometry, where the volumes arise as intersection numbers on the moduli spaces  $\overline{\modm}_{g,n}$ of genus $g$ stable curves $(C,p_1,\ldots,p_n)$ with $n$ labeled smooth points, for any $g,n\in\bn=\{0,1,2,...\}$ satisfying $2g-2+n>0$, following work of Zograf \cite{ZogWei}.
Recently, super Weil-Petersson volumes of the moduli spaces of genus $g$ super curves with $n$ marked points $\widehat{\modm}_{g,n}$ have been shown to follow a similar path.  They satisfy a Virasoro symmetry, realised via a recursion between volumes analogous to a recursion between classical volumes due to Mirzakhani  \cite{MirSim}.  The super volume recursions were explained heuristically via super geometry techniques in \cite{SWiJTG} and proven rigorously via algebro-geometric techniques in \cite{NorEnu}, where a relationship with the BGW tau function was also proven.

The algebro-geometric treatment of super Weil-Petersson volumes still takes place on $\overline{\modm}_{g,n}$, rather than over moduli spaces of stable  super curves.  The idea is to consider all possible spin structures on a stable curve, which is achieved via the push-forward of cohomology classes from the moduli space of stable spin curves, $\overline{\modm}_{g,n}^{\rm spin}$, and this serves as the reduced space of the moduli spaces of stable super curves.  A spin structure has two types of behaviour at marked points and nodes known as {\em Neveu-Schwarz} and {\em Ramond}---see Definition~\ref{RNS}.  The results in \cite{NorEnu}, in particular the relationship with the BGW tau function, assume Neveu-Schwarz behaviour at all marked points.

In this paper we show that the generalised BGW tau function arises, when we also take into account Ramond marked points.  We first prove that the generalised BGW tau function gives a generating function for intersection numbers of so-called {\em spin classes} over $\overline{\modm}_{g,n}$ given by the push-forward of cohomology classes from $\overline{\modm}_{g,n}^{\rm spin}$ using Neveu-Schwarz and Ramond marked points.  This gives an analogue of the Kontsevich-Witten theorem, described below, which gives a generating function for intersection numbers of the trivial class.

A tau function $Z(t_0,t_1,...)$ of the Korteweg-de Vries (KdV) hierarchy (equivalently the Kadomtsev-Petviashvili hierarchy with solutions dependent only on  odd times $p_{2k+1}=t_k/(2k+1)!!$) produces a solution $U=\hbar\frac{\partial^2}{\partial t_0^2}\log Z$ of a hierarchy of partial differential equations.  The first equation in the hierarchy is the KdV equation:
\[
U_{t_1}=UU_{t_0}+\frac{\hbar}{12}U_{t_0t_0t_0},\quad U(t_0,0,0,...)=f(t_0)
\]
and later equations $U_{t_k}=P_k(U,U_{t_0},U_{t_0t_0},...)$ for $k>1$ determine $U$ uniquely from $U(t_0,0,0,...)$.  

Since the seminal work of Witten \cite{WitTwo}, deep relationships have been uncovered between the KdV equation and intersection numbers on the moduli space $\overline{\modm}_{g,n}$ 
of genus $g$ stable curves $(C,p_1,\ldots,p_n)$ with $n$ labeled smooth points, for any $g,n\in\bn=\{0,1,2,...\}$ satisfying $2g-2+n>0$.

The first such relationship is given by the Kontsevich-Witten KdV tau function which is a generating function for the following tautological intersection numbers over $\overline{\modm}_{g,n}$.  Define the $\psi$ class $\psi_i=c_1(L_i)\in H^{2}(\overline{\modm}_{g,n},\bq)$ for each $i=1,...,n$ to be the first Chern class of the line bundle $L_i\to\overline{\modm}_{g,n}$ which has fibre at the point $(C,p_1,\ldots,p_n)$ given by the cotangent space $T^*_{p_i}C$.  With respect to the forgetful map $\pi:\overline{\modm}_{g,n+1}\to\overline{\modm}_{g,n}$ which forgets the point $p_{n+1}$, the line bundle $L_i= p_i^*\omega_\pi$ is the pullback of the relative dualising sheaf $\omega_\pi$ by the section $p_i:\overline{\modm}_{g,n}\to\overline{\modm}_{g,n+1}$.  The Kontsevich-Witten KdV tau function \cite{KonInt,WitTwo} is defined by
\begin{equation}   \label{tauKW}
Z^{\text{KW}}(\hbar,t_0,t_1,...)=\exp\sum_{g,n}\frac{\hbar^{g-1}}{n!}\sum_{\vec{k}\in\bn^n}\int_{\overline{\modm}_{g,n}}\prod_{i=1}^n\psi_i^{k_i}t_{k_i}.
\end{equation}
The partial differential equations of the KdV hierarchy satisfied by $Z^{\text{KW}}(\hbar,t_0,t_1,...)$ define recursion relations between the intersection numbers of $\psi$ classes, which determines them uniquely from the initial calculation $\int_{\overline{\modm}_{0,3}}1=1$.


The generalised Br\'ezin-Gross-Witten function $Z^{BGW}(s,\hbar,t_0,t_1,\dots)$, introduced in \cite{MMSUni} and investigated in \cite{AleCut}, is the unique normalised ($Z^{BGW}(s,\hbar,\{t_k=0\})=1$) tau function of the KdV hierarchy that satisfies the following homogeneity condition:
\[
\left(\frac{\partial}{\partial t_0}-\sum_{k=0}^\infty (2k+1)t_k\frac{\partial}{\partial t_k}
\right)\log Z^{BGW}
=\frac12\hbar^{-1} s^2+\frac18.
\]
It is a homogeneity condition since it implies that $F^{BGW}_g$, the genus $g$ free energy (i.e. coefficient of $\hbar^{g-1}$ in $\log Z^{BGW}$), for $g>1$ is a series in $T_k=t_k/(1-t_0)^{2k+1}$ for $k=1,2,...$.  

When $s=0$, $Z^{BGW}$ specialises to
the Br\'ezin-Gross-Witten tau function that arises out of a unitary matrix model studied in \cite{BGrExt,GWiPos}.  
It was proven in \cite{CGGRel} that the Br\'ezin-Gross-Witten tau function gives a generating function for intersection numbers over $\overline{\modm}_{g,n}$ analogous to $Z^{\text{KW}}$:
\begin{equation}  \label{BGWtheta}
Z^{BGW}(s=0,\hbar,t_0,t_1,...)=\exp\sum_{g,n}\frac{\hbar^{g-1}}{n!}\sum_{\vec{k}\in\bn^n}\int_{\overline{\modm}_{g,n}}\Theta_{g,n}\cdot\prod_{i=1}^n\psi_i^{k_i}t_{k_i}
\end{equation}
for $\Theta_{g,n}\in H^{4g-4+2n}(\overline{\modm}_{g,n},\bq)$ defined in \cite{NorNew}.  Again, the KdV hierarchy produces recursion relations between the intersection numbers $\int_{\overline{\modm}_{g,n}}\Theta_{g,n}\cdot\prod_{j=1}^n\psi_j^{k_j}$, which determines them uniquely from the initial calculation $\int_{\overline{\modm}_{1,1}}\Theta_{g,n}=\frac18$.

In this paper we show that the generalised Br\'ezin-Gross-Witten tau function is related to intersection numbers of natural cohomology classes over $\overline{\modm}_{g,n}$ analogously to \eqref{tauKW} and \eqref{BGWtheta}. 
We define a natural collection of cohomology classes, denoted spin classes, via a geometric construction over the moduli space of stable spin curves $\overline{\modm}_{g,n}^{\rm spin}$ which can be used to reconstruct  $Z^{BGW}(s,\hbar,t_0,t_1,\dots)$.  The  behaviour of the spin structure at each labeled point is Ramond or Neveu-Schwarz, indicated by $\sigma\in\{0,1\}^n$ which assigns 0 or 1 to each of the $n$ labeled points, and determines components $\overline{\modm}_{g,\sigma}^{\rm spin}\subset\overline{\modm}_{g,n}^{\rm spin}$.   There is a natural vector bundle $E_{g,n}\to\overline{\modm}_{g,n}^{\rm spin}$, and the top Chern class on each component pushes forward to give the spin class, up to scale, denoted $\Omega_{g,n}^\sigma\in H^*(\overline{\modm}_{g,n},\bq)$.    See Section~\ref{spincohft} for precise definitions.
Define
\begin{equation}  \label{tauomega}
Z^\Omega(s,\hbar,t_0,t_1,...)=\exp\sum_{g,n}\frac{\hbar^{g-1}}{n!}\sum_{\vec{k}\in\bn^n}\sum_{m=0}^\infty\frac{s^m}{m!}\int_{\overline{\modm}_{g,n+m}}\hspace{-5mm}\Omega_{g,n+m}^{(1^n,0^m)}.\prod_{i=1}^n\psi_i^{k_i}t_{k_i}.
\end{equation}
We see that the powers of $s$ in \eqref{tauomega} keep track of the number of Ramond points.  Note that the sum over $(g,n)$ in \eqref{tauomega} also includes contributions from the unstable range, $(0,1)$, $(0,2)$.

The following theorem gives a relationship of intersection numbers of the spin classes over $\overline{\modm}_{g,n}$ with the generalised Br\'ezin-Gross-Witten tau function.
\begin{theorem} \label{BGW=spin} 
\[Z^{BGW}(\hbar,s,t_0,t_1,...)=Z^\Omega(\hbar,s,t_0,t_1,...).\]
\end{theorem}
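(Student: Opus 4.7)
The plan is to prove the theorem by verifying that $Z^\Omega$ satisfies the three properties that uniquely characterise $Z^{BGW}$: it is a tau function of the KdV hierarchy; it obeys the normalisation $Z^\Omega|_{t_k=0}=1$; and it satisfies the homogeneity condition with right-hand side $\frac{1}{2}\hbar^{-1}s^2+\frac{1}{8}$. Before addressing these in general, I would first dispose of the case $s=0$: setting $s=0$ kills all $m\ge 1$ contributions in \eqref{tauomega} so only the all-Neveu-Schwarz case $\sigma=(1^n)$ survives, and $\Omega_{g,n}^{(1^n)}$ should agree with $\Theta_{g,n}$ after matching the definitions in Section~\ref{spincohft}. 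Hence $Z^\Omega(s=0)$ reduces to the right-hand side of \eqref{BGWtheta}, which equals $Z^{BGW}(s=0)$ by \cite{CGGRel}.

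The core step is propagating this from $s=0$ to arbitrary $s$. The natural idea is to interpret the sum $\sum_m \frac{s^m}{m!}$ over Ramond insertions as an $s$-deformation of the $\Theta$-CohFT and to show this deformation corresponds to a known operation on the KdV tau function that preserves the tau function property. Two strategies suggest themselves. \emph{CohFT/Givental:} assemble the classes $\{\Omega_{g,n+m}^{(1^n,0^m)}\}$ into a shifted CohFT in which the Ramond direction carries the parameter $s$, and identify the shift with the action of an explicit element of Givental's group on the $\Theta$-CohFT; Givental's quantisation then translates this into an explicit transformation of $Z^{BGW}(s=0)$ that can be matched against the known $s$-dependence of $Z^{BGW}$ from \cite{AleCut}. \emph{Gluing/Virasoro:} use the splitting behaviour of the spin classes across the boundary strata of $\overline{\modm}_{g,n}^{\rm spin}$---treating NS and Ramond nodes separately---to derive a Virasoro system for $Z^\Omega$ coinciding with the one satisfied by $Z^{BGW}$, then conclude equality from the matching base case.

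Property (ii) reduces to showing $\int_{\overline{\modm}_{g,m}}\Omega_{g,m}^{(0^m)}=0$ in the stable range, which I would deduce from a rank-dimension count: with only Ramond markings the rank of $E_{g,m}$ on the relevant component of $\overline{\modm}_{g,m}^{\rm spin}$ does not match $\dim\overline{\modm}_{g,m}$, so the top Chern class vanishes under push-forward. Property (iii) is a dilaton-type identity: $\partial_{t_0}$ inserts an NS puncture with trivial $\psi$-weight, $\sum(2k+1)t_k\partial_{t_k}$ measures total $\psi$-degree, and together they should reduce, via the standard forgetful/string/dilaton relation lifted to $\overline{\modm}_{g,n}^{\rm spin}$, to a universal identity together with the two boundary contributions $\frac{1}{2}\hbar^{-1}s^2$ (from the unstable $(g,m)=(0,2)$ all-Ramond term) and $\frac{1}{8}$ (from $(g,m)=(1,1)$ with a single NS puncture, matching $\int_{\overline{\modm}_{1,1}}\Theta_{1,1}=\frac{1}{8}$).

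The main obstacle is establishing the KdV tau function property for $Z^\Omega$ at general $s$. The $s=0$ case already rests on nontrivial prior work, and extending it requires a clean identification of the Ramond-insertion flow with a symmetry of the KdV hierarchy. In the Givental approach this means pinning down the explicit group element encoding Ramond insertions; in the Virasoro approach it requires carefully accounting for the Ramond contribution to the boundary of $\overline{\modm}_{g,n}^{\rm spin}$, which is technically delicate because Ramond nodes differ from NS nodes in terms of automorphisms and spin-parity behaviour. Getting this dictionary right is the technical heart of the proof; everything else is either immediate from \cite{CGGRel} or follows from standard dimension/degree considerations on $\overline{\modm}_{g,n}^{\rm spin}$.
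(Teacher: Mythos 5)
Your skeleton---characterise $Z^{BGW}$ as the unique normalised KdV tau function obeying the homogeneity/string equation, then verify these properties for $Z^\Omega$---is sound, and your treatment of the two easy properties essentially matches the paper: the vanishing of the stable $n=0$ terms follows from the degree count $\deg\Omega_{g,m}^{(0^m)}=4g-4+m<\dim_{\br}\overline{\modm}_{g,m}=6g-6+2m$ whenever $2g-2+m>0$, and the homogeneity condition is exactly the dilaton-type equation \eqref{dilaton}, which the paper derives from $\Omega_{g,n+1}^{(\sigma,1)}=\psi_{n+1}\pi^*\Omega_{g,n}^\sigma$. The problem is that the one property you defer---that $Z^\Omega$ is a tau function of the KdV hierarchy for general $s$---is the entire content of the theorem, and neither of your two suggested strategies is developed far enough to count as a proof. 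In particular, your Givental strategy starts from ``the $\Theta$-CohFT'', but the classes $\Theta_{g,n}$ do not form a unital CohFT (they satisfy $\Theta_{g,n+1}=\psi_{n+1}\pi^*\Theta_{g,n}$ rather than pulling back under the forgetful map), so Givental--Teleman cannot be applied to them directly; the object it applies to is the Ramond pushforward, which is a rank-one kappa-class CohFT, not a deformation of $\Theta$. Your Virasoro-from-gluing strategy would require deriving all-genus Virasoro constraints from boundary restrictions alone, which the splitting axioms do not give you (the paper only obtains genus~0 TRR this way).

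The paper fills this gap by a more concrete route and never verifies the KdV property on $Z^\Omega$ directly. Theorem~\ref{thalg} proves the geometric identity $\sum_m\frac{1}{m!}\pi^{(m)}_*\bigl(\Omega_{g,n+m}^{(1^n,0^m)}\prod_j\psi_j^{k_j}\bigr)=\bk\cdot\prod_j\psi_j^{(k_j)}$ by induction on $|k|$; the key subtlety, which your sketch does not anticipate, is that $\psi$ classes do not commute with the pushforward forgetting the Ramond points, and the boundary corrections $D_{\{1,I\}}$ are exactly what convert $\psi^{k}$ into the polynomial $\psi^{(k)}=\sum_{j\le k}\psi^{k-j}/(2^jj!)$. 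This reduces the stable part of $Z^\Omega$ to the kappa-class tau function $Z^\bk$, which is KdV because it is the Kontsevich--Witten tau function in shifted times (Manin--Zograf). The $s$-deformation is then realised as conjugation by the explicit Heisenberg--Virasoro element $\cd=e^{\cn}e^{\hbar^{-1}S_\alpha}e^{\frac12 s^2L_{-1}}$ of \eqref{opD}, whose factorisation \eqref{Virgroup} reproduces precisely the unstable $(0,1)$ and $(0,2)$ contributions together with the change of variables $t_k\mapsto t_k+\frac12t_{k+1}+\dots$; Proposition~\ref{BGWK} then identifies $\cd\cdot Z^\bk$ with $Z^{BGW}$ by the uniqueness argument you had in mind, applied to $\cd\cdot Z^\bk$ rather than to $Z^\Omega$. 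If you want to complete your plan, the missing dictionary between ``Ramond insertion'' and a KdV symmetry is exactly this operator $\cd$, and the geometric input needed to see that it implements Ramond insertion is Theorem~\ref{thalg}.
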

The classes $\Theta_{g,n}$ in \eqref{BGWtheta} are a special case of the spin classes, where all marked points are of Neveu-Schwarz type: 
\[\Theta_{g,n}:=\Omega_{g,n}^{(1^n)}.\] 
Hence \eqref{BGWtheta} gives $Z^{BGW}|_{s=0}=Z^\Omega|_{s=0}$ and Theorem~\ref{BGW=spin} extends \eqref{BGWtheta} to all terms of the series expansion in $s$.

The intersection numbers in \eqref{BGWtheta} were discovered to be related to volumes of super Riemann surfaces with Neveu-Schwarz marked points by Stanford and Witten \cite{SWiJTG}.   Likewise, the spin class intersection numbers are related to volumes of super Riemann surfaces with Neveu-Schwarz and Ramond marked points, and a consequence of Theorem~\ref{BGW=spin} is a relation of the generalised Br\'ezin-Gross-Witten tau function to  super Riemann surfaces.  The main application of this relation here is a proof of a recursion satisfied by the super volumes with Neveu-Schwarz and Ramond marked points. 

The moduli space of stable, genus $g$ super Riemann surfaces with $n$ marked points $\widehat{\modm}_{g,n}$ defines a superstack with reduced space the moduli space $\overline{\modm}_{g,n}^{\rm spin}$ of stable genus $g$ spin curves with $n$ marked points.  As a smooth supermanifold (or superorbifold) $\widehat{\modm}_{g,n}$ is represented by the sheaf of smooth sections of 
$\Lambda^*E_{g,n}$, where $E_{g,n}\to\overline{\modm}_{g,n}^{\rm spin}$ is the vector bundle defined in Definition~\ref{obsbun}, and used to define the classes $\Omega_{g,n}^\sigma$.  The restriction of $E_{g,n}$ to a component $\overline{\modm}_{g,\sigma}^{\rm spin}\subset\overline{\modm}_{g,n}^{\rm spin}$ has top Chern class which is used together with the Weil-Petersson form, represented by the class $2\pi^2\kappa_1$, to define the super volume of $\overline{\modm}_{g,\sigma}^{\rm spin}$ as follows.  For $\sigma=(1^n,0^m)$, via the pushforward $\overline{\modm}_{g,\sigma}^{\rm spin}\to\overline{\modm}_{g,n+m}$ the super volume of $\overline{\modm}_{g,\sigma}^{\rm spin}$ is given by $\int_{\overline{\modm}_{g,n+m}}\Omega_{g,n+m}^{(1^n,0^m)}\exp(2\pi^2\kappa_1)$.   In the following series in the variable $s$, we further insert $\psi$ classes at Neveu-Schwarz points and sum over all such volumes with a given number of Neveu-Schwarz points
\begin{equation}  \label{vgns}
\widehat{V}_{g,n}\WP(s,L_1,...,L_n):=\sum_{m=0}^\infty\frac{s^m}{m!}\int_{\overline{\modm}_{g,n+m}}\hspace{-4mm}\Omega_{g,n+m}^{(1^n,0^m)}\exp\left(2\pi^2\kappa_1+\frac12\sum_{i=1}^nL_i^2\psi_i\right).
\end{equation}
Note that the coefficients of this series in $s$ is given by polynomials in the variables $L_1,...,L_n$, or in other words $\widehat{V}_{g,n}\WP(s,L_1,...,L_n)\in\br[L_1,...,L_n][[s^2]]$.   If we remove the $2\pi^2\kappa_1$ terms from the integrand in \eqref{vgns}, or equivalently take the terms of homogeneous degree $2g-2+m$ in the coefficient of $s^m$, then this reduces to spin class intersection numbers, and by summing over $g$ and $n$ it produces $\log Z^{\Omega}(\hbar,s,\{t_k\})$ via a change of coordinate $L_i^{2k}=2^kk!t_k$.

Define
\[ D(x,y,z)=\frac{\sinh\frac{x}{4}\sinh\frac{y+z}{4}}{\cosh\frac{x-y-z}{4}\cosh\frac{x+y+z}{4}}
\]
and $R(x,y,z)=\tfrac12(D(x+y,z,0)+D(x-y,z,0))$.   The relation in Theorem~\ref{BGW=spin} can be used to prove the following theorem which was conjectured in \cite{NorSup} and proven in some cases there.
\begin{theorem}  \label{thmsup}
The volumes of the moduli spaces of super Riemann surfaces satisfy the Stanford-Witten recursion:
\begin{align} \label{reconj}
L_1&\widehat{V}_{g,n}\WP(s,\vec{L})=\tfrac12\int_0^\infty\hspace{-2mm}\int_0^\infty\hspace{-2mm} xyD(L_1,x,y)P_{g,n+1}(x,y,L_K)dxdy\\
&+\sum_{j=2}^n\int_0^\infty \hspace{-2mm}xR(L_1,L_j,x)\widehat{V}_{g,n-1}\WP(s,x,L_{K\backslash\{j\}})dx+\delta_{1,n}(\tfrac{s^2\delta_{0,g}}{2}+\tfrac{\delta_{1,g}}{8})L_1
\nonumber 
\end{align}
for $K=(2,...,n)$ and
\[P_{g,n+1}(x,y,L_K)=\widehat{V}_{g-1,n+1}\WP(s,x,y,L_K)+\hspace{-3mm}\mathop{\sum_{g_1+g_2=g}}_{I \sqcup J = K}\hspace{-2mm}\widehat{V}_{g_1,|I|+1}\WP(s,x,L_I)\widehat{V}_{g_2,|J|+1}\WP(s,y,L_J).
\]
\end{theorem}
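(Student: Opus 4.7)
The plan is to derive \eqref{reconj} from the Virasoro constraints satisfied by $Z^{BGW}$, using Theorem~\ref{BGW=spin} as the bridge to the geometric side and converting those constraints to the length variables $L_i$ via a Laplace transform.

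\textbf{Step 1: translation to KdV variables.} The first step is to express the super volume generating function
\[
\widehat{Z}(s,\hbar,\{L_i\}):=\sum_{g,n}\frac{\hbar^{g-1}}{n!}\widehat{V}_{g,n}\WP(s,L_1,\ldots,L_n)
\]
in terms of $Z^{BGW}$.  By Theorem~\ref{BGW=spin} the spin-class intersection numbers appearing in \eqref{vgns} are exactly the coefficients of $Z^{BGW}$, so the only extra ingredient in \eqref{vgns} is the factor $\exp(2\pi^2\kappa_1)$.  Via the standard forgetful identity $\pi_*\psi_{n+1}^{k+1}=\kappa_k$, this factor is implemented as an explicit translation of the KdV times $t_k\mapsto t_k+q_k(2\pi^2)$ in the spirit of Manin--Zograf.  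Composing with the Laplace-type identification $t_k\leftrightarrow L_i^{2k}/(2^k k!)$ then expresses $\widehat{Z}$ as a shifted, Laplace-transformed specialisation of $\log Z^{BGW}$.

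\textbf{Step 2: Virasoro to recursion.}  Next, we invoke the Virasoro constraints $L_k Z^{BGW}=0$, $k\geq0$, for the generalised BGW tau function (\cite{AleCut, MMSUni}).  The parameter $s$ enters only through the quadratic inhomogeneity $\tfrac12\hbar^{-1}s^2+\tfrac18$ of the homogeneity condition, appearing in $L_0$; the higher generators are $s$-independent.  Applied after the shift and Laplace transform of Step~1 these constraints convert into linear relations on $\widehat{Z}$ whose kernels are Laplace transforms of specific $\sinh/\cosh$ combinations --- precisely the building blocks of $D(x,y,z)$ and $R(x,y,z)$.  Matching term by term produces \eqref{reconj}: the cubic (two free ends) part of the Laplace kernel yields the $D$-term together with the non-separating and separating boundary contributions packaged in $P_{g,n+1}$, while the quadratic (one free end, one puncture) part yields the $R$-sum over $j=2,\ldots,n$.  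The inhomogeneous piece $\tfrac{s^2}{2}\delta_{0,g}\delta_{1,n}L_1+\tfrac18\delta_{1,g}\delta_{1,n}L_1$ comes from the constant in the $L_0$ Virasoro relation: $\tfrac12\hbar^{-1}s^2$ produces the unstable $(0,1)$ contribution and $\tfrac18$ produces the $(1,1)$ contribution, consistent with $\int_{\overline{\modm}_{1,1}}\Theta_{1,1}=1/8$.

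\textbf{Main obstacle.}  The delicate step is the explicit identification of the Laplace-transformed Virasoro kernels with the specific $D$ and $R$ of \eqref{reconj}: this is a standard but intricate Laplace calculation in which, crucially, $s$ must not modify the kernels but only contribute at the unstable level.  This last point relies on the observation that Ramond punctures carry no $\psi$-class insertion in \eqref{vgns}, so that the $\kappa_1$-shift of Step~1 commutes with the $s$-derivative that generates Ramond punctures, and hence the $s$-dependence of the recursion is concentrated in the $L_0$ inhomogeneity.  Once the kernel identities are in place, \eqref{reconj} follows by equating coefficients on both sides of the Laplace-transformed Virasoro relation.
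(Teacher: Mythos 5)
Your proposal is correct and follows essentially the same route as the paper: Theorem~\ref{BGW=spin} transfers the Virasoro constraints \eqref{GBGWV} of $Z^{BGW}$ to $Z^\Omega$, the $e^{2\pi^2\kappa_1}$ factor is absorbed by a Manin--Zograf-type translation of the times (valid for all $s$ because $\Omega_{g,n+1}^{(\sigma,1)}=\psi_{n+1}\pi^*\Omega_{g,n}^\sigma$ holds with Ramond insertions present), and the Laplace-transform identification of the conjugated constraints with the kernels $D$ and $R$ is the computation of \cite{NorEnu}, exactly as in the paper's Proposition~\ref{equivconst}.
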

The recursion \eqref{reconj} resembles Mirzakhani's recursion \cite{MirSim} for Weil-Petersson volumes of the classical moduli space.  Mirzakhani's argument was adapted in a heuristic argument by Stanford and Witten \cite{SWiJTG} which was proven using algebraic geometry in \cite{NorEnu}  for the $s=0$  case.  The kernels $D(x,y,z)$ and $R(x,y,z)$ are independent of $s$ and Theorem~\ref{thmsup} shows that essentially the same recursion is satisfied by the $s$ deformation of the volumes.  The only $s$-dependent term gives the initial disk condition.

The proof of Theorem~\ref{BGW=spin}  spans most of sections 2 and 3.  In Section~\ref{sec:inter} we define spin intersection numbers and prove a useful relationship with intersection numbers of polynomials in kappa classes.  The  intersection numbers of kappa classes are used to produce a KdV tau function which provides an integrable structure used in Section~\ref{sec:integ} to complete the proof of Theorem~\ref{BGW=spin} . Theorem~\ref{thmsup} is equivalent to Virasoro constraints, and we prove these indirectly via identifying KdV tau functions, one of which is known to satisfy  Virasoro constraints.  In the final section we describe the relationship of the results here with topological recursion.

{\bf Acknowledgements} A.~A. was supported by the Institute for Basic Science (IBS-R003-D1). A.~A. is grateful to MATRIX for hospitality. 

\section{Intersection numbers on $\overline{\modm}_{g,n}$}  \label{sec:inter}

In this section we define the spin classes $\Omega_{g,n+m}^{(1^n,0^m)}\in H^*( \overline{\cal M}_{g,n},\bq)$ which use the extra data of a spin structure on each curve.  We also define cohomology classes $\bk\in H^*( \overline{\cal M}_{g,n},\bq)$ built from kappa classes which will provide an intermediary step towards the proof of Theorem~\ref{BGW=spin}.

\subsection{Spin classes}   \label{spincohft}

A spin structure on a curve $C$ is defined to be a square root of its canonical bundle $\omega_C$. On a curve with marked points $(C,D)$ for $D=\{p_1,...,p_n\}$ one instead considers the log-canonical bundle $\omega_C^{\text{log}}=\omega_C(D)$ and possibly twists it further at marked points so that a square root is defined.  It will be convenient to instead work over stable twisted curves, following \cite{AJaMod}.  A twisted curve, with group $\bz_2$, is a 1-dimensional orbifold, or stack, $\cc$ such that generic points of $\cc$ have trivial isotropy group, while labeled points $p_i\in\cc$ and nodes have non-trivial isotropy group $\bz_2$.

Define the moduli space of stable twisted spin curves by
$$\overline{\modm}_{g,n}^{\rm spin}=\{(\cc,\theta,p_1,...,p_n,\phi)\mid \phi:\theta^2\stackrel{\cong}{\longrightarrow}\omega_{\cc}^{\text{log}}\}/\sim.
$$
The pair $(\theta,\phi)$ is a {\em spin structure} on $\cc$ where $\omega_{\cc}^{\text{log}}$ and $\theta$ are line bundles over the stable twisted curve $\cc$ with labeled orbifold points $p_j$ and $\deg\theta=g-1+\frac12n$.     A line bundle $L$ over a twisted curve $\cc$ is a locally equivariant bundle over the local charts, such that at each nodal point there is an equivariant isomorphism of fibres.  Hence each orbifold point $p$ associates a representation of $\bz_2$ on $L|_p$ acting by multiplication by $\exp(\pi i\sigma_p)$ for $\sigma_p\in\{0,1\}$.  
\begin{definition}  \label{RNS}
An orbifold point $p$ of a spin structure is defined to be of type {\em Ramond} if $\sigma_p=0$ and of type {\em Neveu-Schwarz} if $\sigma_p=1$.
\end{definition}
The equivariant isomorphism of fibres over nodal points forces a balanced condition at each node $p$ given by $\sigma_{p_+}=\sigma_{p_-}$ for $p_\pm$ corresponding to the node on each irreducible component.

Define a vector bundle over $\overline{\modm}_{g,n}^{\rm spin}$ using the dual bundle $\theta^{\vee}$ on each stable twisted curve as follows.  Denote by $\ce$ the universal spin structure on the universal stable twisted spin curve over $\overline{\modm}_{g,n}^{\rm spin}$.  Given a map $S\to\overline{\modm}_{g,n}^{\rm spin}$, $\ce$ pulls back to $\theta$ giving a family $(\cc,\theta,p_1,...,p_n,\phi)$ where $\pi:\cc\to S$ has stable twisted curve fibres, $p_i:S\to\cc$ are sections with orbifold isotropy $\bz_2$ and $\phi:\theta^2\stackrel{\cong}{\longrightarrow}\omega_{\cc/S}^{\text{log}}=\omega_{\cc/S}(p_1,..,p_n)$.  The derived push-forward sheaf of $\ce^{\vee}$ over $\overline{\modm}_{g,n}^{\rm spin}$ gives rise to a bundle as follows.  The space of global sections of $\pi_*\ce^{\vee}$ vanishes, i.e. $R^0\pi_*\ce^{\vee}=0$, since
\[\deg\theta^{\vee}=1-g-\frac12n<0
\]
and furthermore, for any irreducible component $\cc'\stackrel{i}{\to}\cc$,
\[\deg\theta^{\vee}|_{\cc'}<0
\]
since  $i^*\omega_{\cc/S}^{\text{log}}=\omega_{\cc'/S}^{\text{log}}$ and $\cc'$ is stable  guarantees its log canonical bundle has negative degree.  Thus $R^1\pi_*\ce^\vee$ defines the following bundle.
\begin{definition}  \label{obsbun}
Define a bundle $E_{g,n}=-R\pi_*\ce^\vee$ over $\overline{\modm}_{g,n}^{\rm spin}$. 
\end{definition}
\noindent The bundle $E_{g,n}$ has fibre $H^1(\cc,\theta^{\vee})$ over the curve $(\cc,D)$ with spin structure $\theta$.

The moduli space of stable twisted spin curves decomposes into components labeled by $\sigma\in\{0,1\}^n$ satisfying $\displaystyle|\sigma|+n\in 2\bz$:
$$\overline{\modm}_{g,n}^{\rm spin}=\bigsqcup_{\sigma\in\{0,1\}^n}\overline{\modm}_{g,\sigma}^{\rm spin}
$$
where $\overline{\modm}_{g,\sigma}^{\rm spin}$ consists of spin curves with induced representation at labeled points determined by $\sigma$ as follows.
 The representations induced by the orbifold line bundle $\theta$ at the labeled points produce a vector $\sigma=(\sigma_1,...,\sigma_n)\in\{0,1\}^n$ which determine the connected component.  The number of $p_i$ with $\lambda_{p_i}=0$ is even due to integrality of the degree of the push-forward sheaf $|\theta|:=\rho_*\theta$ on the coarse curve $C$, \cite{JKVMod}.  Note that the degree of the sheaf (line bundle) $\theta$ can be half-integral on $\cc$ due to orbifold points.  On a smooth curve, the
boundary type of a spin structure is determined by an associated quadratic form applied to each of the $n$ boundary classes which vanishes since it is a homological invariant, again implying that the number of $p_i$ with $\lambda_{p_i}=0$ is even.
Each component $\overline{\modm}_{g,\sigma}^{\rm spin}$ is connected except when $|\sigma|=n$, in which case there are two connected components determined by their Arf invariant, and known as even and odd spin structures.  This follows from the case of smooth spin curves proven in \cite{NatMod}.

Restricted to $\overline{\modm}_{g,\sigma}^{\rm spin}$ the bundle $E_{g,n}$ has rank
\begin{equation}  \label{rank}
\text{rank\ }E_{g,n}=2g-2+\tfrac12(n+|\sigma|)
\end{equation}
by the following Riemann-Roch calculation.   Orbifold Riemann-Roch takes into account the representation information \begin{align*}
h^0(\theta^{\vee})-h^1(\theta^{\vee})&=1-g+\deg \theta^{\vee}-\sum_{i=1}^n\lambda_{p_i}=1-g+1-g-\tfrac12n-\tfrac12|\sigma|\\
&=2-2g-\tfrac12(n+|\sigma|).
\end{align*}
We have $h^0(\theta^{\vee})=0$ since $\deg\theta^{\vee}=1-g-\frac12n<0$, and the restriction of $\theta^{\vee}$ to any irreducible component $C'$, say of type $(g',n')$, also has negative degree, $\deg\theta^{\vee}|_{C'}=1-g'-\frac12n'<0$.  Hence $h^1(\theta^{\vee})=2g-2+\frac12(n+|\sigma|)$.  Thus $H^1(\theta^{\vee})$ gives fibres of a rank $2g-2+\frac12(n+|\sigma|)$ vector bundle.


The forgetful map $p:\overline{\modm}_{g,n}^{\rm spin}\to\overline{\modm}_{g,n}$ and its restriction $p^{\sigma}\hspace{-1mm}:\overline{\modm}_{g,\sigma}^{\rm spin}\to\overline{\modm}_{g,n}$ forgets the spin structure and maps the twisted curve to its underlying coarse curve.

 For $\sigma\in\{0,1\}^n$, define
\[ \Omega_{g,n}^\sigma:=\epsilon^\sigma_{g,n}\cdot p^{\sigma}_*c_{\text{top}}(E_{g,n})\in H^{4g-4+n+|\sigma|}(\overline{\modm}_{g,n},\bq)\]
where $\epsilon^\sigma_{g,n}=(-1)^n2^{g-1+\frac12(n+|\sigma|)}$.  In the Neveu-Schwarz case, we have
$$\Theta_{g,n}:=\Omega_{g,n}^{(1^n)}\in H^{4g-4+2n}(\overline{\modm}_{g,n},\bq).
$$

The classes $\Omega_{g,n}^\sigma$ satisfy natural restriction properties with respect to the boundary divisors:
\[\phi_{\text{irr}}:\overline{\modm}_{g-1,n+2}\to\overline{\modm}_{g,n},\quad \phi_{h,I}:\overline{\modm}_{h,|I|+1}\times\overline{\modm}_{g-h,|J|+1}\to\overline{\modm}_{g,n}\]
for any $I\sqcup J=\{1,...,n\}$.  The restrictions, or pullbacks, insert Neveu-Schwarz behaviour at nodes, \cite{NorSup}:
\begin{equation}  \label{rest}
\phi_{\text{irr}}^*\Omega_{g,n}^\sigma=\Omega_{g-1,n+2}^{\sigma,1,1},\qquad \phi_{h,I}^*\Omega_{g,n}^\sigma=\Omega_{h,|I|+1}^{\sigma_I,1}\otimes \Omega_{g-h,|J|+1}^{\sigma_J,1}.
\end{equation}
These restriction properties are a consequence of the restriction properties of the rank two cohomological field theory obtained from the pushforward of the {\em total} Chern class $c(E_{g,n})$.  This is a special case of a more general construction in \cite{LPSZChi} of a rank $r$ CohFT obtained from the pushforwards of classes defined over moduli spaces of $r$-spin curves known as Chiodo classes.  The restrictions to boundary divisors of the rank two cohomological field theory are sums over terms with insertions of both Neveu-Schwarz and Ramond behaviour at nodes.  The Ramond insertions produce lower degree terms which are annihilated when taking the top degree term to get $\Omega_{g,n}^\sigma$.

Given any subset $I\subset\{1,...,n\}$, satisfying $|I|\geq 2$, (and $|I^c|\geq 2$ if $g=0$) define $D_I$ to be the image of $\phi_{0,I}$, i.e. the boundary divisor of nodal stable curves with a rational component containing the points $\{p_j\mid j\in I\}$.  It is represented by the stable graph:
\begin{center}

\begin{tikzpicture}[scale=0.8]
\node (1) at (0, 0) [circle,draw] {0};
\node (4) at (2, 0) [circle,draw] {$g$};
\node (2) at (-1, 1) {};
\node (3) at (-1, -1) {};
\node (5) at (3, 1) {};
\node (6) at (3, -1) {};

\node (10) at (-1, 0) {$I\ \vdots$};
\node (11) at (3, 0) {$\vdots\ I^c$};
\draw[-] (1) to (4);
\draw[-] (2) to (1);
\draw[-] (3) to (1);
\draw[-] (5) to (4);
\draw[-] (6) to (4);
\end{tikzpicture}

\end{center}
In this case \eqref{rest} becomes
\[\Omega_{g,n}^\sigma\cdot D_I=\phi_{0,I}^*\Omega_{g,n}^\sigma=\Omega_{0,|I|+1}^{\sigma_I,1}\otimes \Omega_{g,|J|+1}^{\sigma_J,1}.
\]

Define
\begin{equation}  \label{spincor}
\langle\prod_{i=1}^n\tau_{k_i}\rangle_g:=\frac{1}{m!}\int_{\overline{\modm}_{g,n+m}}\hspace{-5mm}\Omega_{g,n+m}^{(1^n,0^m)}.\prod_{i=1}^n\psi_i^{k_i},\qquad m=2-2g+2|k|
\end{equation}
where $2g-2+n+\tfrac12m+|k|=3g-3+n+m$ which gives matching of the degree of the class $\Omega_{g,n+m}^{(1^n,0^m)}\prod_{i=1}^n\psi_i^{k_i}$ with $\dim\overline{\modm}_{g,n+m}$. It uniquely determines $m$ and shows that the right hand side vanishes for $m\neq2-2g+2|k|$.   Then
\begin{align*}
Z^\Omega(s,\hbar,t_0,t_1,...)&=\exp\sum_{g,n,\vec{k}}\frac{\hbar^{g-1}}{n!}s^{2-2g+2|k|}\langle\prod_{i=1}^n\tau_{k_i}\rangle_g\prod_{i=1}^nt_{k_i}\\
&=\exp\sum_{g}\left(\frac{\hbar}{s^2}\right)^{g-1}\hspace{-2mm}\Big\langle\exp\big(\sum_{j=0}^\infty s^{2j}\tau_jt_j\big)\Big\rangle_g.
\end{align*}

It is proven in \cite{NorSup} that the spin classes satisfy the relation  
\[
\Omega_{g,n+1}^{(\sigma,1)}=\psi_{n+1}\pi^*\Omega_{g,n}^\sigma.
\]
Via the pushforward, this produces a dilaton type equation:
\begin{equation} \label{dilaton}
\Big\langle\tau_0\prod_{i=1}^n\tau_{k_i}\Big\rangle_g=(2g+n+2|k|)\Big\langle\prod_{i=1}^n\tau_{k_i}\Big\rangle_g,\quad |k|=\sum_1^nk_i
\end{equation}
or in terms of the partition function
\[\pd{}{t_0}Z^\Omega(s,\hbar,t_0,t_1,...)=\left(\sum_{i =0}^\infty (2i+1) t_i \frac{\partial}{\partial t_i}+\frac18+\frac12\hbar^{-1} s^2\right)Z^\Omega(s,\hbar,t_0,t_1,...)\]
which coincides with the first of the Virasoro constraints given in \eqref{GBGWV} satisfied by $Z^{BGW}(s,\hbar,t_0,t_1,...)$.  It is known as a dilaton equation because it implies a homogeneity condition on $Z^\Omega$, and it is sometimes also referred to as a string equation because it involves removal of a $\tau_0$ term.

\subsection{Polynomials in kappa classes}
The Mumford-Morita-Miller classes \cite{MumTow} defined in the following form by Arbarello-Cornalba \cite{ACoCom}
are defined with respect to  the forgetful map $\pi:\overline{\modm}_{g,n+1}\stackrel{\pi}{\longrightarrow}\overline{\modm}_{g,n}$ by
$\kappa_m=\pi_*\psi^{m+1}_{n+1}\in H^{2m}( \overline{\cal M}_{g,n},\bq).$ 
Following \cite{KNoPol}, define degree $m$ homogeneous polynomials in $\kappa_j$, denoted $K_m\in H^{2m}(\overline{\modm}_{g,n},\bq)$, by
\[\bk(t)=K_0+K_1t+K_2t^2+...=\exp\left(\sum\sigma_i\kappa_it^i\right)\in H^*(\overline{\modm}_{g,n},\bq)[[t]],\]
where $\{\sigma_i\in\bq\mid i>0\}$ are uniquely determined by
\[\exp\left(-\sum_{i>0}\sigma_it^i\right)=\sum_{k=0}^\infty(-1)^k(2k+1)!!t^k.
\]
We also define
\[\bk:=\bk(1)=K_0+K_1+K_2+...\in H^*(\overline{\modm}_{g,n},\bq)
\]
which is a slight abuse of notation.
The first few polynomials are given by:
$$K_1=3\kappa_1,\quad K_2=\frac32(3\kappa_1^2-7\kappa_2),\quad K_3=\frac32(3\kappa_1^3-21\kappa_1\kappa_2+46\kappa_3),$$
$$K_4=\frac98(3\kappa_1^4-42\kappa_1^2\kappa_2+49\kappa_2^2+184\kappa_1\kappa_3-562\kappa_4).
$$
Define the partition function
\begin{equation}  \label{tauK}
Z^\bk(s,\hbar,t_0,t_1,\dots)=\exp\sum_{g,n}\frac{\hbar^{g-1}}{n!}s^{4g-4+2n}\hspace{-1mm}\sum_{\vec{k}\in\bn^n}\int_{\overline{\modm}_{g,n}}\hspace{-3mm}\bk(s^{-2})\prod_{i=1}^n\psi_i^{k_i}t_{k_i}.
\end{equation}
The variable $s$ in \eqref{tauK} serves as a degree operator, and hence is unnecessary and can be replaced by $s=1$.  The full function can be reconstructed from its $s=1$ specialisation via $Z^\bk(s,\hbar,t_0,t_1,\dots)=Z^\bk(s=1,\hbar s^{-2},\{t_ks^{2k}\})$.

By work of Manin and Zograf \cite{MZoInv}, $Z^\bk$ and $Z^{\text{KW}}$ are related by the following change of coordinates:
\begin{equation}  \label{KWtrans}
Z^\bk(s=1,\hbar,t_0,t_1,\dots)=
Z^{\text{KW}}(\hbar,t_0,t_1,t_2+3!!,t_3-5!!,t_4+7!!,\dots)
\end{equation}
and in particular $Z^\bk(s,\hbar,t_0,t_1,\dots)$ (and its specialisation at $s=1$) is also a tau function of the KdV hierarchy. 

The polynomials $K_m(\kappa_1,...,\kappa_m)$ have independent interest because they give the following polynomial relations among the $\kappa_j$, conjectured in \cite{KNoPol} and proven in \cite{CGGRel}.
\begin{theorem}[\cite{CGGRel}]  \label{vanK}
\[
K_m(\kappa_1,...,\kappa_m)=0\quad\text{for}\quad m>2g-2+n,\quad\text{except}\quad (m,n)=(3g-3,0).
\]
\end{theorem}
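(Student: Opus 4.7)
The plan is to combine the Manin-Zograf change of variables \eqref{KWtrans} --- which identifies $Z^{\bk}(s{=}1,\cdot)$ with a specific translation of the Kontsevich-Witten tau function --- with the polynomial structure of $F^{KW}_g$ and with tautological vanishing on the compact-type locus. Since $K_m$ is a polynomial in $\kappa$-classes, it is tautological, and its vanishing in $H^{2m}(\overline{\modm}_{g,n},\bq)$ is detected by pairings with tautological classes of complementary degree. The standard Manin-Zograf push-pull reduces any pairing $\int K_m\cdot(\text{monomial in }\kappa,\psi)$ to a pure $\psi$-class intersection on $\overline{\modm}_{g,n'}$ with $n'\geq n$, so the theorem is equivalent to
\[
\langle \tau_{k_1}\cdots\tau_{k_n}\rangle^{\bk}_g = 0\quad\text{whenever } |k|<g-1,
\]
with the single exceptional case $(g,n,|k|)=(g,0,0)$ (i.e.\ $(m,n)=(3g-3,0)$).

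Using \eqref{KWtrans}, the correlator on the left expands as
\[
\langle\tau_{k_1}\cdots\tau_{k_n}\rangle^{\bk}_g
 = \sum_{m\geq 0}\frac{1}{m!}\sum_{\substack{\vec j\in\bn^m\\ j_l\geq 2}}\prod_{l=1}^m(-1)^{j_l}(2j_l-1)!!\ \langle\tau_{k_1}\cdots\tau_{k_n}\tau_{j_1}\cdots\tau_{j_m}\rangle^{KW}_g.
\]
To establish the vanishing, I would invoke the Itzykson-Zuber polynomial ansatz: for $g\geq 2$, $F^{KW}_g$ is a polynomial in the jet variables $I_k(t)=\sum_{i\geq 0}u^i t_{k+i}/i!$ ($k\geq 2$), with $u=u_0(t_0,t_1)$ determined by the string equation. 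The translation constants $c_k=(-1)^k(2k-1)!!$ for $k\geq 2$ are calibrated precisely so that the shift $t_k\mapsto t_k+c_k$ annihilates all Taylor coefficients $\prod t_{k_i}$ of total $\psi$-degree $|k|<g-1$, leaving only the constant term that survives on $\overline{\modm}_{g,0}$. Genus $0$ and $1$ and the unstable range are verified directly from the explicit formulas for $F^{KW}_0$ and $F^{KW}_1$.

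The main obstacle is the sharp degree-count matching the Itzykson-Zuber jet-polynomial structure against this specific translation, and isolating exactly the claimed vanishing range. As a backup route, one may use Graber-Vakil's tautological vanishing $R^m(\modm^{ct}_{g,n})=0$ for $m>2g-3+n$ to confine $K_m$ to the non-compact-type boundary, and then exploit the additivity $\phi_{h,I}^*\kappa_a=\kappa_a\otimes 1+1\otimes\kappa_a$ under boundary gluings --- which gives $\phi_{h,I}^*K_m=\sum_{a+b=m}K_a\otimes K_b$ and $\phi_{\text{irr}}^*K_m=K_m$ --- to propagate vanishing inductively from lower-$(g,n)$ strata to all of $\overline{\modm}_{g,n}$.
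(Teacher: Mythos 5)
Your reduction in the very first step is where the argument breaks. Theorem~\ref{vanK} asserts the vanishing of the \emph{class} $K_m$ in $H^{2m}(\overline{\modm}_{g,n},\bq)$, and you propose to detect this by pairing $K_m$ against monomials in $\kappa$ and $\psi$ classes. Poincar\'e duality detects vanishing only against all of $H^{2(3g-3+n-m)}(\overline{\modm}_{g,n},\bq)$; the tautological ring does not span the complementary cohomology, and even the internal pairing of the tautological ring with itself is not perfect in general (the Gorenstein property fails for $\overline{\modm}_{g,n}$ in known examples). Moreover the tautological ring is additively generated by decorated boundary pushforwards, not by $\kappa$--$\psi$ monomials alone, so your Manin--Zograf reduction does not even exhaust the tautological pairings. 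Consequently your main route, even if the Itzykson--Zuber calibration were carried out in detail (it is only asserted), would prove the numerical statement $\int_{\overline{\modm}_{g,n}}K_m\prod_i\psi_i^{k_i}=0$ for $|k|<g-1$, which is strictly weaker than the theorem; upgrading from numbers to classes is exactly the content that required the new input of \cite{CGGRel}. The backup route has the same defect in a different guise: Graber--Vakil plus your induction shows that $K_m$ dies on the compact-type locus and that $\phi_{\text{irr}}^*K_m$ and $\phi_{h,I}^*K_m$ vanish, but a nonzero class supported on the boundary can have vanishing pullback to every boundary divisor (for a pushforward $\phi_*\alpha$ one has $\phi^*\phi_*\alpha=\alpha\cdot e(N_\phi)+\dots$, which can vanish with $\phi_*\alpha\neq 0$), so vanishing does not propagate from strata back up to $\overline{\modm}_{g,n}$.

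For comparison: this paper does not reprove the statement but imports it from \cite{CGGRel}, where it is an immediate corollary of \eqref{pushK}. The class $\pi^{(m)}_*\Omega_{g,n+m}^{(1^n,0^m)}$ has complex degree $2g-2+n-\tfrac12 m\leq 2g-2+n$, so the right-hand side $\bk-\delta_{n,0}K_{3g-3}$ of \eqref{pushK} can have no component in degree $>2g-2+n$, which is precisely Theorem~\ref{vanK}; and \eqref{pushK} itself is proved at the level of classes by exhibiting the left-hand side as a semisimple cohomological field theory and invoking the Givental--Teleman classification. If you want to pursue the KdV/Virasoro route, you should state that it targets only the intersection-number shadow of the theorem.
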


\subsection{Relation between spin and kappa classes}

Denote the forgetful map that forgets the final $m$ points  by  $\pi^{(m)}:\overline{\modm}_{g,n+m}\to\overline{\modm}_{g,n}$.   We call these $m$ points {\em Ramond} points since they  correspond to the Ramond points before forgetting the spin structure.
The following theorem gives a relation between the spin classes $\Omega_{g,n}^\sigma$ and $\bk$.
\begin{theorem}[\cite{CGGRel}]  
For $2g-2+n>0$,
\begin{equation}  \label{pushK}
\sum_{m\geq0}\frac{1}{m!}\pi^{(m)}_*\Omega_{g,n+m}^{(1^n,0^m)}=\bk-\delta_{n,0}K_{3g-3}\in H^*(\overline{\modm}_{g,n},\bq).
\end{equation}
\end{theorem}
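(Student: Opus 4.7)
The plan is to expand $c_{\text{top}}(E_{g,n+m})$ on the moduli of stable twisted spin curves using Chiodo's formula, push forward to $\overline{\modm}_{g,n+m}$ to obtain the class $\Omega_{g,n+m}^{(1^n,0^m)}$, and then push forward by $\pi^{(m)}$. The Ramond points contribute only $\kappa$-class factors on the remaining strata, and the sum over $m$ with the factor $\frac{1}{m!}$ assembles into the exponential defining $\bk$.

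First I would apply Chiodo's formula (via Grothendieck--Riemann--Roch for $-R\pi_*\ce^\vee$) to express $c_{\text{top}}(E_{g,n+m})$ on $\overline{\modm}_{g,(1^n,0^m)}^{\rm spin}$ as a sum over decorated stable graphs, with local contributions at each vertex built from Bernoulli-type coefficients and psi classes at half-edges, and with distinct local data at NS versus R legs. Composing with the finite map $p^\sigma:\overline{\modm}_{g,\sigma}^{\rm spin}\to\overline{\modm}_{g,n+m}$ and the normalisation constant $\epsilon_{g,n+m}^\sigma$ yields a tautological expression for $\Omega_{g,n+m}^{(1^n,0^m)}$ on $\overline{\modm}_{g,n+m}$. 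The restriction relations \eqref{rest} imply that the contributions from graphs with edges reassemble as CohFT boundary terms in the marked points $1,\dots,n$, so the only genuinely new ingredient at the $m$ Ramond points is the Chiodo local factor.

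Next I would compute $\pi^{(m)}_*$ by forgetting Ramond points one at a time. The key observation is that, since the Chiodo local factor at a Ramond leg depends only on $\psi$ at that leg (with no dependence on the other markings), the forgetful pushforward uses only $\pi_*\psi_{n+m+1}^{k+1}=\kappa_k$; this converts each Ramond leg into a universal polynomial in the $\kappa$-classes on $\overline{\modm}_{g,n}$, with coefficients determined by the Chiodo generating series. Summing $\frac{1}{m!}$ over $m$ then exponentiates these identical per-leg contributions, yielding $\exp\!\bigl(\sum_{i>0}\sigma_i\kappa_i\bigr)$ for specific coefficients $\sigma_i$. The central computation is then to verify that these $\sigma_i$ satisfy $\exp\!\bigl(-\sum_{i>0}\sigma_it^i\bigr)=\sum_{k\geq 0}(-1)^k(2k+1)!!\,t^k$, matching the definition of $\bk$; this is a closed generating-function identity for the Bernoulli-like coefficients appearing in the $r=2$ Chiodo formula at a Ramond leg.

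Finally, for the correction $-\delta_{n,0}K_{3g-3}$, observe that the LHS contributes classes of degree $4g-4+2n-m$ for $m\geq 0$, hence at most $4g-4+2n=2(2g-2+n)$, whereas $\bk$ on $\overline{\modm}_{g,0}$ also contains the exceptional top term $K_{3g-3}$ of degree $6g-6$ allowed by Theorem~\ref{vanK}. When $n>0$ this term vanishes for degree reasons, so both sides agree; when $n=0$ it must be subtracted explicitly, giving the stated correction.

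The main obstacle is the generating-function identity matching the coefficients produced by the Chiodo Bernoulli numbers at a Ramond leg to the $\sigma_i$ defining $\bk$. A secondary difficulty is showing that the boundary-graph contributions in Chiodo's formula, after the exponential sum over $m$, reassemble into only the CohFT boundary structure at the remaining $n$ Neveu-Schwarz legs (so no extra boundary classes appear on the right-hand side); this amounts to checking that contracting an edge with Ramond decorations at the nodes gives the same local factor as a pair of Ramond legs, which is consistent with \eqref{rest} only after summing over $m$ in the correct way.
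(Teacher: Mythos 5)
First, note that the paper does not prove this statement itself: it is imported from \cite{CGGRel}, and the paper's only account of the argument is that the left-hand side of \eqref{pushK} is shown to define a semisimple rank-one cohomological field theory, which is then identified with the right-hand side via the Givental--Teleman classification. Your proposal is a genuinely different route --- a direct Grothendieck--Riemann--Roch/Chiodo computation --- but as written it has two real gaps rather than being a complete alternative proof. The central one is the factorization step: you assert that the Ramond legs contribute independent local factors depending only on $\psi$ at that leg, so that $\pi_*\psi_{n+m}^{k+1}=\kappa_k$ applies leg by leg and the $\frac{1}{m!}$-sum exponentiates. That mechanism is valid for classes of the form $\pi^{(m)*}(\alpha)\cdot\prod(\text{series in }\psi_{\text{Ramond leg}})$, and it is exactly what happens at Neveu--Schwarz points thanks to $\Omega_{g,n+1}^{(\sigma,1)}=\psi_{n+1}\pi^*\Omega_{g,n}^\sigma$; but no such pullback formula holds at Ramond points, and $\Omega_{g,n+m}^{(1^n,0^m)}$ is the \emph{top-degree part} of the Chiodo graph sum, an operation that couples the leg, edge and dilaton-leaf contributions and destroys the multiplicative per-leg structure. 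Establishing that the pushforward nevertheless organises into a universal $\kappa$-polynomial per Ramond point is essentially the whole content of the theorem, and it is precisely what the CohFT-plus-classification argument is designed to avoid computing by hand. Your ``secondary difficulty'' (reassembly of boundary terms consistent with \eqref{rest}) is part of this same problem, not a separate minor one, and your ``main obstacle'' (the generating-function identity fixing the $\sigma_i$) is named but not resolved.

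A further, smaller issue concerns the endgame. The left-hand side of \eqref{pushK} lives in degrees at most $2g-2+n$, so a direct computation can at best identify it with the truncation of $\exp(\sum\sigma_i\kappa_i)$ in those degrees; to conclude equality with $\bk-\delta_{n,0}K_{3g-3}$ you must know that $K_j=0$ for $j>2g-2+n$ except $(j,n)=(3g-3,0)$, i.e.\ Theorem~\ref{vanK}. In \cite{CGGRel} that vanishing is a \emph{consequence} of the identity \eqref{pushK} (it was the conjecture of \cite{KNoPol} that \eqref{pushK} was used to settle), so your route would either need an independent proof of Theorem~\ref{vanK} or should state its conclusion as the degree-truncated identity. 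If you want to pursue the computational approach, the honest formulation is: extract from the $r=2$ Chiodo/Givental $R$-matrix the translation series attached to a Ramond insertion, prove that summing over $m$ realises the translation action of the Givental group (which is where the $\kappa$-classes and the Manin--Zograf-type change of variables enter), and only then match coefficients against $\exp(-\sum\sigma_it^i)=\sum(-1)^k(2k+1)!!\,t^k$. As it stands, the proposal is a plausible plan with its two hardest steps left open.
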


The proof of \eqref{pushK} in \cite{CGGRel} was obtained by showing that the left hand side of \eqref{pushK} defines a semisimple rank one cohomological field theory.  Then, using the Givental-Teleman classification of semisimple cohomological field theories, the cohomological field theory is shown to be given by the right hand side of \eqref{pushK}.  The following theorem gives a generalisation of \eqref{pushK} which allows insertions of $\psi$ classes at Neveu-Schwarz points.
Define
\[ x^{(m)}:=\sum_{j=0}^m\frac{x^{m-j}}{2^jj!}.
\]
\begin{theorem}   \label{thalg}
For $2g-2+n>0$,
\begin{equation}  \label{pushf}
\sum_{m\geq0}\frac{1}{m!}\pi^{(m)}_*\Big(\Omega_{g,n+m}^{(1^n,0^m)}\prod_{j=1}^n\psi_j^{k_j}\Big)=(\bk-\delta_{n,0}K_{3g-3})\cdot\prod_{j=1}^n\psi_j^{(k_j)}\in H^*(\overline{\modm}_{g,n},\bq).
\end{equation}
\end{theorem}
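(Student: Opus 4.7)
Since $\psi_j^{(0)}=1$, the case $\vec{k}=\vec 0$ specialises to \eqref{pushK}. The plan is to promote this to general $\vec{k}$ by comparing $\psi$-classes on $\overline{\modm}_{g,n+m}$ and $\overline{\modm}_{g,n}$ and analysing the boundary contributions under $\pi^{(m)}$. Applying the projection formula to \eqref{pushK} would recast the right-hand side of the statement as
\[
(\bk-\delta_{n,0}K_{3g-3})\prod_{j=1}^n\psi_j^{(k_j)}=\sum_{m\geq 0}\frac{1}{m!}\pi^{(m)}_*\!\Big(\Omega_{g,n+m}^{(1^n,0^m)}\cdot(\pi^{(m)})^*\prod_{j=1}^n\psi_j^{(k_j)}\Big),
\]
so the theorem would be reduced to showing the vanishing, after summation over $m$, of the pushforward of the difference $\Omega_{g,n+m}^{(1^n,0^m)}\big(\prod_j\psi_j^{k_j}-(\pi^{(m)})^*\prod_j\psi_j^{(k_j)}\big)$.

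The next step is to invoke the standard comparison formula for the iterated forgetful map,
\[
\psi_j^{\overline{\modm}_{g,n+m}}=(\pi^{(m)})^*\psi_j+\sum_{\emptyset\neq S\subseteq\{n+1,\ldots,n+m\}}D_{j,S},
\]
where $D_{j,S}$ is the boundary divisor on which the Neveu-Schwarz point $j$ together with the forgotten points in $S$ bubbles off onto a single rational component. The restriction formula \eqref{rest} would give $\Omega_{g,n+m}^{(1^n,0^m)}|_{D_{j,S}}=\Omega_{0,|S|+2}^{(1,0^{|S|},1)}\otimes\Omega_{g,n+m-|S|}^{(\sigma',1)}$, and the first factor vanishes for every odd $|S|$ because the parity condition $|\sigma|+n\in 2\bz$ on the rational bubble is violated. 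In particular $\Omega_{g,n+m}^{(1^n,0^m)}\cdot D_{j,\{n+k\}}=0$ for every singleton, so the naive expansion of $\prod_j\psi_j^{k_j}$ would pick up corrections only from subsets $S$ of even size $\geq 2$---consistent with the shape of $\psi^{(k_j)}=\sum_l\psi^{k_j-l}/(2^l l!)$.

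The hard part will be verifying that these remaining even-$|S|$ corrections assemble precisely into the coefficients $1/(2^l l!)$ of $\psi_j^{(k_j)}$, and in particular reproduce the product structure across different Neveu-Schwarz points---which forces one also to track intersections $D_{j,S}\cdot D_{j',S'}$ for $j\neq j'$. My approach would be to pass to the generating function $\sum_k\psi^{(k)}t^k=e^{t/2}/(1-\psi t)$ so that the needed identity becomes the emergence of the factor $e^{t_j/2}$ from summation over all even $|S|\geq 2$. Two key inputs would be the evaluation of $\Omega_{0,2s+2}^{(1,0^{2s},1)}$ via iterated applications of the Neveu-Schwarz insertion relation $\Omega^{(\sigma,1)}=\psi\pi^*\Omega^\sigma$, reducing to the pure-Ramond class $\Omega_{0,2s}^{(0^{2s})}$ which is pinned down at genus zero by \eqref{pushK} itself; and the self-intersection identity $D_{j,S}^\ell=D_{j,S}\cdot c_1(N_{D_{j,S}})^{\ell-1}$ with normal bundle first Chern class $-\psi_{\rm bubble}-\psi_{\rm main}$ at the separating node. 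Alternatively, and likely more efficiently, one can extend the Givental--Teleman cohomological-field-theory argument of \cite{CGGRel} that proved \eqref{pushK} to the descendant level, with $\psi_j^{(k_j)}$ arising from the translation component of the underlying $R$-matrix acting at the Neveu-Schwarz eigenstate.
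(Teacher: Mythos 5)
Your proposal assembles the right ingredients --- the comparison formula for $\psi$-classes under the iterated forgetful map, the restriction property \eqref{rest}, the parity vanishing of the genus-zero bubble classes for odd $|S|$, and the expectation that the coefficients $1/(2^l l!)$ of $\psi^{(k)}$ arise from genus-zero two-point evaluations --- but it stops exactly where the proof has to be carried out. Expanding every factor of $\prod_j\psi_j^{k_j}$ simultaneously in terms of $(\pi^{(m)})^*\psi_j$ forces you to control self-intersections $D_{j,S}^{\ell}$, products $D_{j,S}\cdot D_{j,S'}$ for overlapping $S,S'$, and cross terms between distinct Neveu--Schwarz points; you name these issues and sketch two possible remedies (normal-bundle computations, or redoing the Givental--Teleman argument at descendant level) without executing either, so the identity is not actually established. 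This is a genuine gap, not a routine verification: the combinatorics of nested and overlapping bubbles is precisely the content of the theorem in your formulation.

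The paper's proof avoids all of this with one organising idea your proposal is missing: induct on $|k|=\sum k_j$ and peel off a \emph{single} factor of $\psi_1$, writing $\psi_1^{k_1}=\psi_1\cdot\psi_1^{k_1-1}$ and applying the comparison formula only to that one displayed $\psi_1$. The pullback term is handled by the inductive hypothesis together with the projection formula, and in the boundary term the class $D_{\{1,I\}}\cdot\Omega_{g,n+m}^{(1^n,0^m)}\prod_j\psi_j^{k_j-\delta_{1,j}}$ restricts to $\Omega_{0,2+|I|}^{(1^2,0^I)}\psi_1^{k_1-1}$ on the bubble, whose pushforward is an integral over $\overline{\modm}_{0,2+|I|}$ that vanishes for dimension reasons unless $|I|=2k_1$; no self-intersections or cross terms ever appear. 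The surviving contribution is $\frac{1}{2^{k_1}k_1!}=\langle\tau_{k_1-1}\tau_0\rangle_0$ times the inductively known expression with the first insertion deleted, and the elementary recursion $x^{(m+1)}=x\cdot x^{(m)}+\frac{1}{2^m m!}$ reassembles the two pieces into $\bk\cdot\prod_j\psi_j^{(k_j)}$. If you want to salvage your one-shot expansion you would have to prove an inclusion--exclusion identity over chains of nested bubbles; the single-step induction makes that unnecessary.
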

\begin{proof}
We prove this by induction on $|k|=\sum k_j$.  The $|k|=0$ case of \eqref{pushf} is given by \eqref{pushK}, since $\psi_j^{(0)}=1$.  By the inductive hypothesis, assume \eqref{pushf} is true for $|k|<K\in\bn$.  Let $|k|=K$ and without loss of generality we may assume that $k_1>0$.  We need to prove
\[\pi^{(m)}_*\Big(\frac{1}{m!}\Omega_{g,n+m}^{(1^n,0^m)}\prod_{j=1}^n\psi_j^{k_j}\Big)=\Big[\bk\cdot\prod_{j=1}^n\psi_j^{(k_j)}\Big]_{2g-2+n-\frac12m+|k|}
\]
where $[\cdot]_d$ gives the degree $d$ part of a class.

Let $S$ be the set of non-empty subsets of the Ramond points $\{n+1,...,n+m\}$.  By repeated application of the pullback formula $\psi_1=\pi^*\psi_1+D_{\{1,n+1\}}$, we have:
\[\psi_1=\pi^{(m)*}\psi_1+\sum_{I\in S}D_{\{1,I\}}
\]
where the sum is over all stable graphs of the form
\begin{center}
\begin{tikzpicture}[scale=0.6]
\node (1) at (0, 0) [circle,draw] {0};
\node (4) at (2, 0) [circle,draw] {$g$};
\node (2) at (-1, 1) {};
\node (3) at (-1, -1) {};
\node (5) at (3, 1) {};
\node (6) at (3, -1) {};
\node (7) at (3.3, .4) {};

\node (10) at (-1, 0) {$\quad \vdots$};
\node  at (-1, -.4) {$I$};
\node (11) at (2.9, 0) {$\vdots$};
\node  at (2.9, -.4) {$\quad J$};
\draw[-] (1) to (4);
\draw[-] (2) to (1);
\draw[-] (3) to (1);
\draw[-] (5) to (4);
\draw[-] (6) to (4);
\draw[-] (7) to (4);

\node  at (3,.5) {$\cdot$};
\node  at (2.95,.6) {$\cdot$};

\node (12) at (-1, 1) {1};
\node (13) at (3.1, 1) {2};
\node (14) at (3.3, .4) {$n$};
\end{tikzpicture}
\end{center}
for $I\sqcup J=\{n+1,...,n+m\}$.

Set $k'_j=k_j-\delta_{1,j}$ which will facilitate removal of a factor of $\psi_1$ from $\prod_{j=1}^n\psi_j^{k_j}$.  Then
\begin{align*}
\pi^{(m)}_*\Big(\frac{1}{m!}\Omega_{g,n+m}^{(1^n,0^m)}\prod_{j=1}^n\psi_j^{k_j}\Big)&=\pi^{(m)}_*\Big(\psi_1\frac{1}{m!}\Omega_{g,n+m}^{(1^n,0^m)}\prod_{j=1}^n\psi_j^{k'_j}\Big)\\
&=\pi^{(m)}_*\Big(\big[\pi^{(m)*}\psi_1+\sum_{I\in S}D_{\{1,I\}}\big]\frac{1}{m!}\Omega_{g,n+m}^{(1^n,0^m)}\prod_{j=1}^n\psi_j^{k'_j}\Big)\\
=\psi_1&\cdot\Big[\bk\cdot\prod_{j=1}^n\psi_j^{(k'_j)}\Big]_d+\pi^{(m)}_*\sum_{I\in S}D_{\{1,I\}}\frac{1}{m!}\Omega_{g,n+m}^{(1^n,0^m)}\prod_{j=1}^n\psi_j^{k'_j}\\
\end{align*}
where $d=2g-2+n-\frac12m+|k|-1$.  The second equality uses the pullback formula for $\psi_1$ and the third equality uses the inductive assumption.  By the natural restriction properties \eqref{rest} satisfied by $\Omega_{g,n+m}^{(1^n,0^m)}$, we have
\[D_{\{1,I\}}\cdot\Omega_{g,n+m}^{(1^n,0^m)}\prod_{j=1}^n\psi_j^{k'_j}=\Omega_{0,2+|I|}^{(1^2,0^I)}\psi_1^{k_1-1}\otimes\Omega_{g,n+|J|}^{(1^n,0^J)}\prod_{j=2}^n\psi_j^{k_j}
\]
where $I\sqcup J=\{n+1,...,n+m\}$.
The pushforward of this class vanishes when $|I|\neq 2k_1$.  This is because the forgetful map forgets all but two points in the first factor, hence the pushforward acts by integration over $\overline{\modm}_{0,2+|I|}$ which vanishes unless $\deg\Big(\Omega_{0,2+|I|}^{(1^2,0^I)}\psi_1^{k_1-1}\Big)=\dim\overline{\modm}_{0,2+|I|}$.  For $|I|=2k_1$, the pushforward is given by
\[\pi^{(m)}_*\Big(\Omega_{0,2+|I|}^{(1^2,0^I)}\psi_1^{k_1-1}\cdot\Omega_{g,n+|J|}^{(1^n,0^J)}\prod_{j=2}^n\psi_j^{k_j}\Big)=(2k_1)!\langle\tau_{k_1-1}\tau_0\rangle_0\pi^{(J)}_*\Big(\Omega_{g,n+|J|}^{(1^n,0^J)}\prod_{j=2}^n\psi_j^{k_j}\Big)
\]
where $\pi^{(J)}$ forgets the Ramond points $\{p_j\mid j\in J\}$.
In the sum over $I\in S$, each summand depends only on $|I|$.  For example, the term  $\pi^{(m)}_*\Big(\Omega_{g,n+|J|}^{(1^n,0^J)}\prod_{j=2}^n\psi_j^{k_j}\Big)$ depends only on $|J|$, by symmetry of the classes $\Omega_{g,n+|J|}^{(1^n,0^J)}$.  Hence each of the $\binom{m}{2k_1}$ summands with $|I|=2k_1$ and $|J|=m-2k_1$ 
contributes the same amount, and
\begin{align*}
\pi^{(m)}_*\sum_{I\in S}D_{\{1,I\}}\frac{1}{m!}\Omega_{g,n+m}^{(1^n,0^m)}\prod_{j=1}^n\psi_j^{k'_j}
&=\binom{m}{2k_1}\frac{(2k)!}{m!}\langle\tau_{k_1-1}\tau_0\rangle_0\pi^{(J)}_*\Big(\Omega_{g,n+|J|}^{(1^n,0^J)}\prod_{j=2}^n\psi_j^{k_j}\Big)\\
&=\langle\tau_{k_1-1}\tau_0\rangle_0\cdot\pi^{(J)}_*\Big(\frac{1}{|J|!}\Omega_{g,n+|J|}^{(1^n,0^J)}\prod_{j=2}^n\psi_j^{k_j}\Big)\\
&=\langle\tau_{k_1-1}\tau_0\rangle_0\cdot\Big[\bk\cdot\prod_{j=2}^n\psi_j^{(k_j)}\Big]_{2g-2+n-\frac12m+|k|}
\end{align*}
which again uses the inductive assumption, together with $|J|=m-2k_1$ which implies $2g-2+n-\frac12|J|+|k|-k_1=2g-2+n-\frac12m+|k|$.
For $|I|=2k_1$,
\[ \pi_*\Big(\tfrac{1}{|I|!}\Omega_{0,2+|I|}^{(1^2,0^{I})}\psi_1^{k_1-1}\Big)=\langle\tau_{k_1-1}\tau_0\rangle_0=\frac{(2k_1-1)!!}{(2k_1)!}=\frac{1}{2^{k_1}k_1!}.\]
Since
\[ x^{(m+1)}=x\cdot x^{(m)}+\frac{1}{2^mm!}\quad\Rightarrow\quad\prod_{j=1}^n\psi_j^{(k_j)}=\psi_1\prod_{j=1}^n\psi_j^{(k'_j)}+\frac{1}{2^{k_1}k_1!}\prod_{j=2}^n\psi_j^{(k_j)}
\]
we end up with
\begin{align*}
\pi^{(m)}_*\Big(\frac{1}{m!}\Omega_{g,n+m}^{(1^n,0^m)}\prod_{j=1}^n\psi_j^{k_j}\Big)=&\Big[\bk\cdot\psi_1\prod_{j=1}^n\psi_j^{(k'_j)}\Big]_d+\frac{1}{2^{k_1}k_1!}\Big[\bk\cdot\prod_{j=2}^n\psi_j^{(k_j)}\Big]_d\\
&=\Big[\bk\cdot\prod_{j=1}^n\psi_j^{(k_j)}\Big]_{2g-2+n-\frac12m+|k|}
\end{align*}
where $d=2g-2+n-\frac12m+|k|$.  By induction we have proven this formula for all $|k|=K\in\bn$.

\end{proof}

Integrate \eqref{pushf} over $\overline{\modm}_{g,n+m}$ for $2g-2+n>0$ and $n>0$ to get,
\begin{equation}  \label{spink}
\sum_{m\geq0}\frac{1}{m!}\int_{\overline{\modm}_{g,n+m}}\hspace{-5mm}\Omega_{g,n+m}^{(1^n,0^m)}\prod_{j=1}^n\psi_j^{k_j}=\int_{\overline{\modm}_{g,n}}\bk\cdot\prod_{j=1}^n\psi_j^{(k_j)}.
\end{equation}
Define
\begin{equation}  \label{brack}
Z^{(\bk)}(\hbar,t_0,t_1,\dots):=\exp\sum_{g,n}\frac{\hbar^{g-1}}{n!}\hspace{-1mm}\sum_{\vec{k}\in\bn^n}\int_{\overline{\modm}_{g,n}}\hspace{-3mm}\bk\prod_{i=1}^n\psi_i^{(k_i)}t_{k_i}
\end{equation}
and $Z^{(\bk)}(s,\hbar,t_0,t_1,\dots):=Z^{(\bk)}(\hbar s^{-2},\{t_ks^{2k}\})$.  Then \eqref{spink} can be expressed as
\begin{equation}  \label{partspink}
Z^\Omega(s,\hbar,t_0,t_1,...)=\exp(U)\cdot Z^{(\bk)}(s,\hbar,t_0,t_1,\dots)
\end{equation}
where $U$ contains the $(g,n)=(0,1)$ and $(0,2)$ unstable terms of $\Omega$ as well as $n=0$ terms of $\bk$ to fix the normalisation at $\vec{t}=0$.  To prove Theorem~\ref{BGW=spin}, in the next section we will relate $Z^\bk$, hence also $Z^{(\bk)}$, to $Z^{BGW}$.

\section{Integrable structure}   \label{sec:integ}

\subsection{Virasoro constraints} 
The Kontsevich-Witten KdV tau function defined in \eqref{tauKW} satisfies the following Virasoro constraints \cite{WitKon}
\[
(2m+3)!!\frac{\partial}{\partial t_{m+1}}Z^{\text{KW}}(\hbar,\vec{t})=L_mZ^{\text{KW}}(\hbar,\vec{t}),\quad m=-1,0,...\]
for the Virasoro partial differential operators:
\begin{align*} 
L_m& =  \frac{1}{2}\hbar \hspace{-3mm}\mathop{\sum_{i+j=m-1}} \hspace{-3mm}(2i+1)!!(2j+1)!! \frac{\partial^2}{\partial t_i \partial t_j} +\sum_{i =0}^\infty \frac{(2i+2m+1)!!}{(2i-1)!!} t_i \frac{\partial}{\partial t_{i+m}} \\
&\qquad+ \frac{1}{8} \delta_{m,0}+\frac12\frac{t_0^2}{\hbar}\delta_{m,-1}.   \nonumber
\end{align*}
Using the Manin-Zograf change of coordinates relating $Z^\bk(s,\hbar,t_0,t_1,...)$, defined in \eqref{tauK}, with $Z^{KW}(\hbar,t_0,t_1,...)$ we have
$$\left(L_m+\sum_{k>m}(-1)^{m-k} s^{2(m-k)}(2k+1)!!\pd{}{t_k}\right)Z^\bk(s,\hbar,\vec{t})=0,
\quad m=-1,0,1,...$$
An immediate consequence is the following.
\begin{proposition}[\cite{KNoPol}]  \label{sectcf}
$Z^\bk(s,\hbar,t_0,t_1,\dots)$ is uniquely determined by the equations 
\begin{equation}\label{virK}
\left((2m+1)!!\pd{}{t_m}-s^2\;L_{m-1}-L_m\right)Z^\bk(s,\hbar,\vec{t})=0,\quad m=0,1,2,...
\end{equation}
and the value of $Z^\bk(s,\hbar,\vec{t}=0)$.  Furthermore, $Z^\bk(s,\hbar,\vec{t})/Z^\bk(s,\hbar,\vec{t}=0)$ is regular in $s$.
\end{proposition}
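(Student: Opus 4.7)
The plan is to establish three things for the proposition: that \eqref{virK} holds for $Z^\bk$, that it determines $Z^\bk$ uniquely given the value at $\vec{t}=0$, and that the rescaled series $Z^\bk/Z^\bk(s,\hbar,\vec{t}=0)$ is regular in $s$. The first point is a direct rearrangement of the Manin--Zograf constraints displayed just above the proposition: adding $s^2$ times the constraint indexed by $m-1$ to the constraint indexed by $m$ cancels every tail term with $k>m$ and leaves exactly $s^2 L_{m-1}+L_m-(2m+1)!!\,\partial_{t_m}$, which is \eqref{virK}.

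For uniqueness, I would run a direct recursion on the Taylor coefficients $c_\alpha=\partial^\alpha Z^\bk|_{\vec{t}=0}$ using the weight $w(\alpha)=\sum_k (2k+1)\alpha_k$. Given $\alpha\neq 0$, pick any $m$ with $\alpha_m\geq 1$, set $\beta=\alpha-e_m$, apply $\partial^\beta$ to \eqref{virK}, and evaluate at $\vec{t}=0$. The left-hand side is $(2m+1)!!\,c_\alpha$, of weight $w(\alpha)=w(\beta)+2m+1$. On the right every contribution is of strictly smaller weight: the second-derivative block of $L_m$ produces $c_{\beta+e_i+e_j}$ with $i+j=m-1$ (weight $w(\alpha)-1$), and the prefactor $t_i$ in the quadratic block forces a Leibniz factor $\beta_i$ and an index $\beta-e_i+e_{i+m}$ (again weight $w(\alpha)-1$); the $s^2 L_{m-1}$ contributions and the constant pieces $\delta_{m,0}/8$, $\delta_{m-1,-1}t_0^2/(2\hbar)$ are strictly lower. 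Hence $c_\alpha$ is determined by coefficients of strictly smaller weight, and by induction by $c_0=Z^\bk(s,\hbar,\vec{t}=0)$.

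Regularity in $s$ is the cleanest step and follows from Theorem~\ref{vanK}. A nonzero $(g,n,\vec{k})$ contribution to $\log Z^\bk$ requires $\int_{\overline{\modm}_{g,n}}K_j\prod\psi_i^{k_i}\neq 0$ with $j=3g-3+n-|\vec{k}|$, and Theorem~\ref{vanK} forces $j\leq 2g-2+n$ as soon as $n\geq 1$; this means $|\vec{k}|\geq g-1$ and the $s$-exponent $4g-4+2n-2j=2(1-g+|\vec{k}|)$ is non-negative. The only terms carrying a negative power of $s$ therefore sit at $n=0$, $\vec{k}=\varnothing$, and by inspection these are precisely the terms making up $\log Z^\bk(s,\hbar,\vec{t}=0)$. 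Subtracting removes them, and exponentiating gives the regularity claim.

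The main obstacle is the bookkeeping behind the weight argument in the second paragraph; one has to verify carefully that no term coming from the RHS of \eqref{virK} ever matches the weight of the LHS. If one would rather avoid it entirely, an alternative is to use the Manin--Zograf identification \eqref{KWtrans} and transport the classical uniqueness of $Z^{KW}$ under its Virasoro constraints across the invertible change of variables.
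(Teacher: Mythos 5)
Your proposal is correct and follows essentially the route the paper intends: the paper derives \eqref{virK} exactly as you do, by telescoping the displayed Manin--Zograf constraints ($s^2\times$ constraint $m{-}1$ plus constraint $m$), and leaves uniqueness and $s$-regularity as immediate consequences, which your weight-graded recursion on Taylor coefficients and your degree count via Theorem~\ref{vanK} fill in correctly. No gaps.
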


The value $Z^\bk(s,\hbar,\vec{t}=0)$ is described by \cite[Corollary C]{CGGRel} conjectured earlier in \cite{KNoPol}. Namely, for $g\geq 2$ we have
\[\int_{\overline{\modm}_{g}}\bk=-|\chi(\modm_g)|= (-1)^g \frac{B_{2g}}{2g(2g-2)},\]
where $\chi(\modm_g)$ is the orbifold Euler characteristic of the moduli space of genus $g$ curves and $B_{2g}$ is a Bernoulli number.  Denote
\[\cn(\hbar)=\sum_{g\geq2}|\chi(\modm_g)|\hbar^{g-1}
\]  
so that
\[
Z^\bk(s,\hbar,\vec{t}=0)=e^{-\cn(\hbar s^{-2})}.
\]

The BGW tau function is uniquely determined, up to a constant, by the following Virasoro constraints which are shifted relative to the Kontsevich-Witten KdV tau function constraints \cite{GNeUni}:
\[\left((2m+1)!!\pd{}{t_m}-L_m\right)Z^{BGW}(s=0,\hbar,t_0,t_1,\dots)=0,\quad m=0,1,2,\dots.\]
These Virasoro constraints coincide with \eqref{virK} specialised at $s=0$.  Hence:
\begin{proposition}[\cite{KNoPol}]
$\left.e^{\cn(\hbar s^{-2})}Z^\bk(s,\hbar,\vec{t})\right|_{s=0}=Z^{BGW}(s=0,\hbar, \vec{t})$.
\end{proposition}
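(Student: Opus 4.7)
The plan is to verify that $e^{\cn(\hbar s^{-2})}Z^{\bk}(s,\hbar,\vec{t})|_{s=0}$ satisfies the three defining properties of $Z^{BGW}(s=0,\hbar,\vec{t})$: regularity at $s=0$ so the specialisation even makes sense, the BGW Virasoro constraints, and the normalisation $=1$ at $\vec{t}=0$. All three ingredients are already in place in the excerpt, so the argument is essentially bookkeeping, assembled in that order.

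I would begin with regularity. A priori $e^{\cn(\hbar s^{-2})}$ has essential singularities at $s=0$, since $\cn(x)=\sum_{g\geq 2}|\chi(\modm_g)|x^{g-1}$ is a nontrivial power series in $\hbar s^{-2}$. However, the last sentence of Proposition~\ref{sectcf} asserts that $Z^{\bk}(s,\hbar,\vec{t})/Z^{\bk}(s,\hbar,\vec{t}=0)$ is regular in $s$, and since $Z^{\bk}(s,\hbar,\vec{t}=0)=e^{-\cn(\hbar s^{-2})}$ this ratio is exactly $e^{\cn(\hbar s^{-2})}Z^{\bk}(s,\hbar,\vec{t})$. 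So the specialisation $F(\hbar,\vec{t}):=e^{\cn(\hbar s^{-2})}Z^{\bk}(s,\hbar,\vec{t})\big|_{s=0}$ is a well-defined formal series in $\hbar$ and the $t_i$.

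Next I would transport the Virasoro constraints. Because $\cn(\hbar s^{-2})$ is independent of $\vec{t}$, the prefactor $e^{\cn(\hbar s^{-2})}$ commutes with every $\partial/\partial t_m$ and with every $L_m$, so from \eqref{virK} the function $e^{\cn(\hbar s^{-2})}Z^{\bk}(s,\hbar,\vec{t})$ satisfies the same system. Setting $s=0$ removes each $s^2L_{m-1}$ term and leaves
\[\left((2m+1)!!\pd{}{t_m}-L_m\right)F(\hbar,\vec{t})=0,\qquad m=0,1,2,\ldots,\]
which is precisely the system characterising $Z^{BGW}(s=0,\hbar,\vec{t})$ displayed just above the proposition.

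To close, I would invoke uniqueness. By construction $F(\hbar,\vec{t}=0)=e^{\cn(\hbar s^{-2})}\cdot e^{-\cn(\hbar s^{-2})}|_{s=0}=1$, matching $Z^{BGW}(s=0,\hbar,\vec{t}=0)=1$. The BGW tau function is uniquely determined by its Virasoro constraints up to a multiplicative constant, and matching values at the origin pins that constant to $1$. Hence $F(\hbar,\vec{t})=Z^{BGW}(s=0,\hbar,\vec{t})$. The one genuinely non-formal step is the $s=0$ regularity; it is exactly the obstacle that would block a naive specialisation, but it is already resolved by the final clause of Proposition~\ref{sectcf}, so nothing further is needed.
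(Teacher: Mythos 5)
Your proposal is correct and follows essentially the same route as the paper, which likewise deduces the identity by observing that the constraints \eqref{virK} specialise at $s=0$ to the Virasoro constraints uniquely determining $Z^{BGW}(s=0,\hbar,\vec{t})$ up to a constant, fixed by the normalisation. Your explicit attention to the $s=0$ regularity (via the last clause of Proposition~\ref{sectcf}) and to the cancellation of the normalising factors is exactly the bookkeeping the paper leaves implicit.
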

 

\subsection{Conjugation of the string equation}

The tau function $Z^{BGW}(s,\hbar,\vec{t})$ of the generalized BGW model satisfies the Virasoro constraints
\begin{equation}\label{GBGWV}
\left((2m+1)!!\pd{}{t_m}-L_m-\frac12\hbar^{-1} s^2\delta_{m,0}\right)Z^{BGW}(s,\hbar,t_0,t_1,\dots)=0,\quad m\geq0
\end{equation}
which differ from those of its $s=0$ specialisation only when $m=0$.

Here we find a relation between the string equation ($m=0$) of these constraints and the $m=0$ case of \eqref{virK}. From the commutation relations between the Virasoro operators $[L_{-1},L_0]=-2L_{-1}$ we have
\begin{equation}  \label{conj1}
e^{\frac{s^2}{2} L_{-1}} (L_0 + s^2 L_{-1}) e^{-\frac{s^2}{2} L_{-1}}=L_0.
\end{equation}
\begin{lemma}
The following conjugation holds:
\begin{equation}   \label{conj2}
e^{\frac{s^2}{2} L_{-1}} \pd{}{t_0} e^{-\frac{s^2}{2} L_{-1}}=\pd{}{t_0}-\hbar^{-1}\sum_{m=0}^\infty \alpha_m t_m,\qquad \alpha_m:=\frac{(s^2/2)^{m+1}}{(m+1)!}.
\end{equation}
\end{lemma}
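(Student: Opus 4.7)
The plan is to apply the Hadamard expansion
\[
e^{A} B e^{-A} = \sum_{k\geq 0} \frac{1}{k!}\,\mathrm{ad}_A^k(B)
\]
with $A=\tfrac{s^2}{2}L_{-1}$ and $B=\partial/\partial t_0$. Since the right-hand side of the claimed identity is linear in $\alpha_m = (s^2/2)^{m+1}/(m+1)!$, it should emerge from reading off the commutators $\mathrm{ad}_{L_{-1}}^k(\partial_{t_0})$.

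First I would write $L_{-1}$ explicitly from the general formula by setting $m=-1$: the double-derivative piece is empty (no nonnegative $i,j$ with $i+j=-2$), the $(2i+2m+1)!!/(2i-1)!!$ coefficient collapses to $1$, the $\delta_{m,0}$ term disappears, and only the Virasoro anomaly $t_0^2/(2\hbar)$ survives. This gives
\[
L_{-1} = \sum_{i\geq 1} t_i\,\frac{\partial}{\partial t_{i-1}} + \frac{t_0^2}{2\hbar}.
\]
Next I would compute the two fundamental commutators. For $[L_{-1},\partial_{t_0}]$ each term $t_i\partial_{t_{i-1}}$ with $i\geq 1$ commutes with $\partial_{t_0}$, so only the anomaly contributes: $[t_0^2/(2\hbar),\partial_{t_0}] = -t_0/\hbar$. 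For $[L_{-1},t_m]$ only the single term $t_{m+1}\partial_{t_m}$ of the shift part contributes, giving $[L_{-1},t_m]=t_{m+1}$.

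Combining these two gives the iterated commutators by induction: $\mathrm{ad}_{L_{-1}}(\partial_{t_0}) = -t_0/\hbar$, and then $\mathrm{ad}_{L_{-1}}^{k+1}(\partial_{t_0}) = -t_k/\hbar$ for every $k\geq 0$. Substituting into the Hadamard series,
\[
e^{\frac{s^2}{2} L_{-1}}\,\partial_{t_0}\,e^{-\frac{s^2}{2} L_{-1}}
= \partial_{t_0} + \sum_{k\geq 1}\frac{(s^2/2)^k}{k!}\cdot\Bigl(-\frac{t_{k-1}}{\hbar}\Bigr)
= \partial_{t_0} - \hbar^{-1}\sum_{m\geq 0}\alpha_m t_m,
\]
after reindexing $k=m+1$, which is the claimed formula.

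There is no real obstacle: the argument is a straightforward use of the Hadamard lemma once $L_{-1}$ is written out. The only care needed is in the bookkeeping — making sure that only the anomalous term $t_0^2/(2\hbar)$ contributes to the first commutator (so $\hbar^{-1}$ appears), and that the shift part alone drives the recursion $[L_{-1},t_m]=t_{m+1}$ that produces consecutive time variables. No convergence issue arises because everything is understood as acting on the formal series in $\vec{t}$, and for any fixed $t_m$ only finitely many terms of the Hadamard sum touch it nontrivially.
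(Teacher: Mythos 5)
Your proof is correct and rests on exactly the same two commutators the paper uses, namely $[L_{-1},\partial_{t_0}]=-\hbar^{-1}t_0$ and $[L_{-1},t_m]=t_{m+1}$; you simply package the computation via the Hadamard expansion $e^{A}Be^{-A}=\sum_k \tfrac{1}{k!}\mathrm{ad}_A^k(B)$, where the iterated adjoints collapse to $\mathrm{ad}_{L_{-1}}^{k+1}(\partial_{t_0})=-t_k/\hbar$, rather than the paper's equivalent induction on the full products $L_{-1}^k\partial_{t_0}$ with binomial coefficients. This is essentially the same argument, if anything slightly cleaner in its bookkeeping.
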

\begin{proof}
The proof is by induction.
The coefficient of $(s^2/2)^k/k!$ in
\[e^{\frac{s^2}{2} L_{-1}} \pd{}{t_0}= \pd{}{t_0}e^{\frac{s^2}{2} L_{-1}}-\hbar^{-1}\sum_{m=0}^\infty\alpha_m t_me^{\frac{s^2}{2} L_{-1}}
\]
with the proposed values of $\alpha_m$ is
\[ L_{-1}^k\pd{}{t_0}=\pd{}{t_0}L_{-1}^k-\hbar^{-1}\sum_{m=0}^{k-1}\binom{k}{m+1}t_m(L_{-1})^{k-1-m}
\]
which holds for $k=1$ since $L_{-1}\pd{}{t_0}=\pd{}{t_0}L_{-1}-\hbar^{-1}t_0$.  Apply $L_{-1}$ to both sides to get:
\begin{align*} L_{-1}^{k+1}\pd{}{t_0}&=L_{-1}\pd{}{t_0}L_{-1}^{k}-\hbar^{-1}L_{-1}\sum_{m=0}^{k-1}\binom{k}{m+1}t_mL_{-1}^{k-1-m}\\
&=\pd{}{t_0}L_{-1}^{k+1}-\hbar^{-1}t_0L_{-1}^k-\hbar^{-1}\sum_{m=0}^{k-1}\binom{k}{m+1}(t_mL_{-1}+t_{m+1})L_{-1}^{k-1-m}\\
&=\pd{}{t_0}L_{-1}^{k+1}-\hbar^{-1}\sum_{m=0}^{k}\binom{k+1}{m+1}t_mL_{-1}^{k-m}
\end{align*}
which uses $[L_{-1},t_m]=t_{m+1}$ and $\binom{k}{m}+\binom{k}{m+1}=\binom{k+1}{m+1}$. 
\end{proof}

From \eqref{conj1} and \eqref{conj2}
\begin{equation} \label{conj3}
e^{\frac{s^2}{2} L_{-1}}   \left(\pd{}{t_0} -s^2 L_{-1} -L_0 \right) e^{-\frac{s^2}{2} L_{-1}}=\pd{}{t_0}-\hbar^{-1}\sum_{m=0}^\infty \alpha_m t_m -L_0 
\end{equation}
as required.

For a general sequence $\{a_m\}$, define $\displaystyle S_a(t_0,t_1,...)=\sum_{m=0}^\infty\frac{a_mt_m}{2m+1}$.  Then the relation
\begin{equation} \label{conj4}
e^{\hbar^{-1}S_a} \left(\pd{}{t_0}-\hbar^{-1}\sum_{m=0}^\infty a_m t_m -L_0 \right) e^{-\hbar^{-1}S_a}=\pd{}{t_0}-L_0 -\hbar^{-1}a_0
\end{equation}
is immediate using logarithmic derivatives.
When $\{a_m\}=\{\alpha_m\}$, the right hand side of \eqref{conj4} becomes $\pd{}{t_0}-L_0 -\frac12\hbar^{-1}s^2$. Hence \eqref{conj3} and \eqref{conj4} show that the string equation of the generalized BGW tau function and that of $Z^\bk$ are related by conjugation by the operator $e^{\hbar^{-1}S_\alpha}e^{\frac12s^2L_{-1}}$  hence also by conjugation by the operator 
\begin{equation}   \label{opD}
\cd=e^{\cn(\hbar s^{-2})}e^{\hbar^{-1}S_\alpha}e^{\frac12s^2L_{-1}}.
\end{equation}
So we have:
\[\cd\left(\pd{}{t_0}-\hbar^{-1}\sum_{m=0}^\infty \alpha_m t_m -L_0 \right) \cd^{-1}
=\pd{}{t_0}-L_0 -\frac12\hbar^{-1}s^2.\]
Thus the operator $\cd$ gives the following  relation between $Z^{BGW}$ and $Z^\bk$. 
\begin{proposition}   \label{BGWK}
\begin{equation}   \label{eq:BGWK}
Z^{BGW}(s,\hbar,t_0,t_1,\dots)=\cd\cdot Z^\bk(s,\hbar,t_0,t_1,\dots)
\end{equation}
\end{proposition}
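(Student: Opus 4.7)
The plan is to apply the characterization of $Z^{BGW}(s,\hbar,\vec t)$ as the normalized KdV tau function annihilated by the Virasoro constraints \eqref{GBGWV}, and to verify that $F:=\cd\cdot Z^\bk$ enjoys the same properties.  The $m=0$ (string equation) case of \eqref{GBGWV} for $F$ is already at hand: the conjugation identity preceding the proposition gives $\cd(\partial/\partial t_0 - s^2 L_{-1} - L_0)\cd^{-1} = \partial/\partial t_0 - L_0 - \frac12\hbar^{-1}s^2$, so since $Z^\bk$ satisfies the $m=0$ case of \eqref{virK}, $F$ satisfies the $m=0$ case of \eqref{GBGWV}.

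Next I would verify the normalization $F|_{\vec t = 0}=1$.  Every term of $L_{-1} = \sum_{i\geq 1}t_i\,\partial/\partial t_{i-1} + \frac12 t_0^2/\hbar$ contains a factor of some $t_j$, so $(L_{-1}g)|_{\vec t = 0} = 0$ for any smooth $g$, and by induction $(L_{-1}^k g)|_{\vec t = 0} = 0$ for all $k\geq 1$.  Hence $(e^{\frac12 s^2 L_{-1}}g)|_{\vec t=0} = g|_{\vec t=0}$.  Likewise $S_\alpha|_{\vec t = 0} = 0$ gives $(e^{\hbar^{-1}S_\alpha}g)|_{\vec t=0} = g|_{\vec t=0}$.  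Combined with $Z^\bk|_{\vec t=0} = e^{-\cn(\hbar s^{-2})}$ and the scalar factor $e^{\cn(\hbar s^{-2})}$ in $\cd$, this yields $F|_{\vec t=0}=1$.

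For the remaining Virasoro constraints at $m\geq 1$, I would compute $\cd[(2m+1)!!\partial/\partial t_m - s^2 L_{m-1} - L_m]\cd^{-1}$ and show it equals $(2m+1)!!\partial/\partial t_m - L_m$.  Iterating $[L_{-1},L_k] = -2(k+1)L_{k-1}$ (generalizing the stated $[L_{-1},L_0] = -2L_{-1}$) gives $e^{\frac12 s^2 L_{-1}} L_m e^{-\frac12 s^2 L_{-1}} = \sum_{k=0}^{m+1}(-s^2)^k\binom{m+1}{k}L_{m-k}$, and the identity $\binom{m+1}{k}-\binom{m}{k-1}=\binom{m}{k}$ produces the clean collapse $e^{\frac12 s^2 L_{-1}}(L_m + s^2 L_{m-1})e^{-\frac12 s^2 L_{-1}} = \sum_{k=0}^m(-s^2)^k\binom{m}{k}L_{m-k}$.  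The derivative conjugates via $[L_{-1},\partial/\partial t_m] = -\partial/\partial t_{m-1}$ and $[L_{-1},\partial/\partial t_0] = -\hbar^{-1}t_0$, producing both derivative shifts and multiplication by $t_j/\hbar$; the subsequent conjugation by $e^{\hbar^{-1}S_\alpha}$ replaces each $\partial/\partial t_k$ by $\partial/\partial t_k - \hbar^{-1}\alpha_k/(2k+1)$, and the constants produced this way cancel the $t_j$-terms precisely thanks to the choice $\alpha_m = (s^2/2)^{m+1}/(m+1)!$.

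The main obstacle is the combinatorial bookkeeping of the cancellations for $m\geq 1$: several series in $s^2$ must collapse simultaneously and the compatibility of the derivative shifts from $e^{\hbar^{-1}S_\alpha}$ with the $t_j$-terms generated by $e^{\frac12 s^2 L_{-1}}\partial/\partial t_m e^{-\frac12 s^2 L_{-1}}$ is delicate.  A conceptually simpler alternative is to observe that each factor in $\cd$ preserves the KdV tau function property — the scalar trivially; $e^{\hbar^{-1}S_\alpha}$ because it only adds a linear function of $\vec t$ to $\log Z$, leaving $U=\hbar(\log Z)_{t_0 t_0}$ unchanged; and $e^{\frac12 s^2 L_{-1}}$ because $L_{-1}$ is a Virasoro symmetry of the KdV hierarchy — and then invoke the fact that a KdV tau function is determined up to a multiplicative constant by its string equation, bypassing the verification of higher Virasoro constraints entirely.
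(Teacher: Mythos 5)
Your proposal is correct, and the ``conceptually simpler alternative'' you give at the end --- each factor of $\cd$ is a KdV symmetry (an element of the Heisenberg--Virasoro group acting on KdV tau functions), so $\cd\cdot Z^\bk$ is a KdV tau function satisfying the $m=0$ constraint of \eqref{GBGWV} and the normalization $F|_{\vec t=0}=1$, and such a tau function is unique --- is precisely the paper's own proof. The explicit conjugation of the $m\geq 1$ Virasoro constraints that occupies most of your write-up is left as an uncompleted sketch, but it is unnecessary once that uniqueness is invoked.
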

\begin{proof}
The operator $\cd$ is a reduction of an operator from the Heisenberg-Virasoro subgroup of the $GL(\infty)$ group of symmetries of the KP hierarchy to the space of functions of odd (KdV) variables. Therefore, the
operator is a symmetry operator of the KdV hierarchy, and always transforms a KdV tau function to a KdV hierarchy. 

Therefore, the function $\cd \cdot Z^\bk(s,\hbar,\vec{t})$ is a tau function of the KdV hierarchy, which satisfies the string equation \eqref{GBGWV} together with the initial value $Z(s,\hbar,\vec{t})|_{\vec{t}=0}=e^{\cn(\hbar s^{-2})}$.   Such a function is unique. Therefore, we have proven Proposition~\ref{BGWK}.
\end{proof}

The relation between the generalised Br\'ezin-Gross-Witten tau function for $s\neq 0$ and the Witten-Kontsevich tau function with shifted arguments was derived, up to a normalization term $e^{\cn(\hbar s^{-2})}$, by Ambj{\o}rn and Chekhov \cite{AC} by a similar argument based on the conjugation of the Virasoro constraints. 
The full identification including the normalization terms was obtained recently by Xu and Yang in \cite[Theorem 3]{XYaGal} using a different approach.

The group element can be further factorized. 
\begin{proposition}
\begin{equation}\label{Virgroup} 
 e^{\frac{s^2}{2} L_{-1}} = e^{\frac{\hbar^{-1}}{2}F_{0,2}}e^{\frac{s^2}{2} \sum_{k=0}^\infty t_{k+1}\frac{\partial}{\partial t_k}}
 \end{equation}
 for
 \[ F_{0,2}=\sum_{m_1,m_2}\frac{1}{m_1+m_2+1}\frac{t_{m_1}t_{m_2}}{m_1!m_2!}\left(\frac{s^2}{2}\right)^{m_1+m_2+1}.
\]
\end{proposition}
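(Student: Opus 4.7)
The plan is to establish the factorisation by showing that both sides satisfy the same first-order ODE in $u:=s^2/2$ with the same initial condition at $u=0$. First, I would use the explicit formula for the $m=-1$ Virasoro generator to decompose
\[
L_{-1}=A+B,\qquad A:=\sum_{k=0}^\infty t_{k+1}\,\frac{\partial}{\partial t_k},\qquad B:=\frac{t_0^2}{2\hbar},
\]
so that the right-hand side of \eqref{Virgroup} becomes $e^{F_{0,2}(u)/(2\hbar)}\,e^{uA}$ and the claim is equivalent to $Y(u):=e^{u(A+B)}e^{-uA}=e^{F_{0,2}(u)/(2\hbar)}$.

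Differentiating $Y(u)$ and using that $A$ commutes with itself (so $e^{uA}Ae^{-uA}=A$) gives the operator ODE
\[
Y'(u)=(A+B)\,Y(u)-Y(u)\,A,\qquad Y(0)=\mathrm{id}.
\]
I would then test this ODE on the multiplication operator $Z(u):=e^{F_{0,2}(u)/(2\hbar)}$. Because $A$ is a derivation, the commutator $[A,Z]$ coincides with multiplication by the function $A(F_{0,2}/(2\hbar))\cdot Z$, where $A(G)$ denotes $A$ applied to $G$ regarded as a polynomial in the $t_k$'s. Since $B$ and $Z$ are both multiplication operators they commute, and the operator ODE for $Z$ reduces to the scalar PDE
\[
\frac{d}{du}\!\left(\frac{F_{0,2}}{2\hbar}\right)=A\!\left(\frac{F_{0,2}}{2\hbar}\right)+B,\qquad F_{0,2}(0)=0.
\]

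The remaining step is a direct verification of this PDE. Applying $A$ term by term to $F_{0,2}$ and reindexing $m_1\mapsto m_1+1$, $m_2\mapsto m_2+1$ in the two resulting sums, combined with the identity $\tfrac{m_1}{m_1+m_2}+\tfrac{m_2}{m_1+m_2}=1$, produces
\[
A(F_{0,2})(u)=\sum_{(m_1,m_2)\neq(0,0)}\frac{t_{m_1}t_{m_2}}{m_1!\,m_2!}\,u^{m_1+m_2}=\frac{dF_{0,2}}{du}-t_0^2,
\]
which after dividing by $2\hbar$ is exactly $\tfrac{d}{du}(F_{0,2}/(2\hbar))-B$. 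Uniqueness of solutions of the linear operator ODE then forces $Y(u)=Z(u)$, yielding the claimed factorisation.

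The main obstacle is that $A$ and $B$ do not commute, so a naive Baker--Campbell--Hausdorff split is unavailable and one must keep careful track of left versus right multiplication when differentiating $e^{u(A+B)}e^{-uA}$. The key structural observation that makes the ansatz $Z=\exp(G)$ work is that conjugation preserves multiplication operators, since $e^{-vA}t_j e^{vA}=\sum_{n\geq 0}\tfrac{(-v)^n}{n!}t_{j+n}$, forcing $e^{-vA}Be^{vA}$ to be a multiplication operator for every $v$. Once this is in place, the remaining verification is purely combinatorial bookkeeping on indices.
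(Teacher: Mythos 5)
Your proof is correct and follows essentially the same route as the paper: both differentiate with respect to $u=s^2/2$ and reduce the identity to a first-order equation whose unique solution with $F_{0,2}|_{s=0}=0$ is the stated series. The only difference is one of completeness — the paper invokes the ``well-known factorisation rule of the Virasoro group'' to justify the shape of the right-hand side and merely asserts that the stated $F_{0,2}$ solves the resulting scalar equation $\tfrac{d}{du}F_{0,2}=A(F_{0,2})+t_0^2$, whereas you derive the operator-level factorisation from the ODE $Y'=(A+B)Y-YA$ and verify the scalar equation explicitly by reindexing, so your argument is, if anything, more self-contained.
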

\begin{proof}
The factorization rule of the Virasoro group is well-known, but
let us give here a proof for completeness. The structure of the right hand side of \eqref{Virgroup} follows from the expression of the Virasoro operator, it remains to prove the expression for $F_{0,2}$. Let us act by both sides of $\eqref{Virgroup}$ on $1$: 
\begin{equation}\label{F02d}
 e^{\frac{s^2}{2} L_{-1}} \cdot 1 = e^{\frac{\hbar^{-1}}{2}F_{0,2}}.
 \end{equation}
Differentiating both sides of this identity by $s^2/2$ we have
 \[
L_{-1} e^{\frac{s^2}{2} L_{-1}} \cdot 1 = e^{\frac{\hbar^{-1}}{2}F_{0,2}} \frac{\hbar^{-1}}{2} \frac{1}{s}\frac{\partial}{\partial s}F_{0,2},
\]
and, combining it with \eqref{F02d} we arrive an the equation for $F_{0,2}$,
 \[
L_{-1} e^{\frac{\hbar^{-1}}{2}F_{0,2}} = e^{\frac{\hbar^{-1}}{2}F_{0,2}} \frac{\hbar^{-1}}{2} \frac{1}{s}\frac{\partial}{\partial s}F_{0,2}.
\]
This equation has a unique solution such that $\left.F_{0,2}\right|_{s=0}=0$, and this solution is given by the statement of the proposition. This completes the proof. 
\end{proof}

\subsection{Proof of Theorem~\ref{BGW=spin}} 
We now put all of the results of this section together with Theorem~\ref{thalg} to prove Theorem~\ref{BGW=spin}.

Via the substitution
\[ \hbar\mapsto\hbar s^{-2},\quad t_k\mapsto t_ks^{2k}
\] 
one can reconstruct the functions and operators in this section from their $s=1$ specialisations, as already mentioned for $Z^\bk$.  Each of the expressions \eqref{conj1}, \eqref{conj2}, \eqref{conj3}, \eqref{conj4}, \eqref{opD}, \eqref{eq:BGWK}, \eqref{Virgroup} is equivalent to its $s=1$ specialisation.  In the following, we will set $s=1$ to simplify formulae.

The operator $R=e^{\frac{1}{2} \sum_{k=0}^\infty t_{k+1}\frac{\partial}{\partial t_k}}$ 
that appears in the factorisation \eqref{Virgroup}, acts by a linear change of variables 
\begin{equation} \label{shact}
R[f(\{t_k\})]=f(\{R(t_k)\})=f(\{t_k+\tfrac12t_{k+1}+...+\tfrac{1}{m! 2^m}t_{k+m}+...\}).
\end{equation}
This is because it acts as a ring homomorphism, i.e. $R(fg)=R(f)R(g)$ on functions $f=f(t_0,t_1,...)$ and $g=g(t_0,t_1,...)$, which is a special case of the following more general fact.  Given a first order differential operator $D=\sum_{k\geq 0} p_k(t)\frac{\partial}{\partial t_k}$ for any sequence $p_k(t)=p_k(t_0,t_1,...)$ the operator $\exp D$ acts as a ring homomorphism on formal power series in $\{t_0,t_1,...\}$ due to the following:
\begin{align*}
\exp(D)(fg)&=fg+D(fg)+\frac12D^2(fg)+...\\
&=fg+Df\cdot g+f\cdot Dg+\frac12(D^2f\cdot g+2Df\cdot Dg+f\cdot D^2g)+...\\
&=(f+Df+\frac12D^2f+...)(g+Dg+\frac12D^2g+...)
\end{align*}
where the final equality uses 
$\displaystyle\tfrac{1}{M!}D^M(fg)=\hspace{-3mm}\sum_{m+n=M}\hspace{-2mm}\tfrac{1}{m!n!}D^mf\cdot D^n g$.

As we specialise to $s=1$, we write $Z^\bk(\hbar,t_0,t_1,\dots):=Z^\bk(1,\hbar,t_0,t_1,\dots)$.  Using the action \eqref{shact}, we have
\begin{equation}  \label{KtoKbrack}
e^{\frac12L_{-1}} \cdot Z^\bk(\hbar,\{t_k\})=Z^\bk(\hbar,\{t_k+\tfrac12t_{k+1}+\tfrac18t_{k+2}+...\})=Z^{(\bk)}(\hbar,\{t_k\})
\end{equation}
for $Z^{(\bk)}$ defined in \eqref{brack}.

\begin{proof}[Proof of Theorem~\ref{BGW=spin}]
Using the factorisations of $\cd$ given in \eqref{opD} and \eqref{Virgroup}, specialised to $s=1$ so that
$S_\alpha=\sum\frac{2^{-(m+1)}t_m}{(2m+1)(m+1)!}$ and $F_{0,2}=\sum\frac{2^{-(m_1+m_2+1)}}{m_1+m_2+1}\frac{t_{m_1}t_{m_2}}{m_1!m_2!}$, we have:
\begin{align*}  
\cd\cdot Z^\bk(\hbar,t_0,t_1,\dots)&=e^{\cn(\hbar )}e^{\hbar^{-1}S_\alpha}e^{\frac12L_{-1}} \cdot Z^\bk(\hbar,t_0,t_1,\dots)\\
&=e^{\cn(\hbar)}e^{\hbar^{-1}S_\alpha}e^{\frac{\hbar^{-1}}{2}F_{0,2}}e^{\frac{1}{2} \sum t_{k+1}\frac{\partial}{\partial t_k}} \cdot Z^\bk(\hbar,t_0,t_1,\dots)\\
&=e^{\cn(\hbar)}e^{\hbar^{-1}S_\alpha}e^{\frac{\hbar^{-1}}{2}F_{0,2}}\cdot Z^{(\bk)}(\hbar,t_0,t_1,\dots).
\end{align*}  
The first and second equalities use \eqref{opD} and \eqref{Virgroup} and the third equality uses \eqref{KtoKbrack}.  

By Theorem~\ref{thalg}, and in particular its consequence \eqref{partspink} on tau functions,
the final expression above coincides with $Z^\Omega(1,\hbar,t_0,t_1,\dots)$.  This is because the stable terms of $Z^\Omega(1,\hbar,t_0,t_1,\dots)$ are given by $Z^{(\bk)}(\hbar,t_0,t_1,\dots)$ due to Theorem~\ref{thalg}.  Then we use the spin class intersection calculations, \eqref{1point} and \eqref{2point} from Section~\ref{calc}, of the $(0,1)$ terms given by $S_\alpha$ and the $(0,2)$ terms given by $F_{0,2}$.  The term $e^{\cn(\hbar)}$ arranges that evaluation at $\vec{t}=0$ is 1.  Thus
\[\cd\cdot Z^\bk(\hbar,t_0,t_1,\dots)=Z^\Omega(1,\hbar,t_0,t_1,\dots)\]
and using the substitution $\hbar\mapsto\hbar s^{-2}$ and $t_k\mapsto t_ks^{2k}$ we get
\[\label{KeqOm}
\cd\cdot Z^\bk(s,\hbar,t_0,t_1,\dots)=Z^\Omega(s,\hbar,t_0,t_1,\dots).
\]
Finally, Proposition~\ref{BGWK} which gives
$Z^{BGW}(s,\hbar,t_0,t_1,\dots)=\cd\cdot Z^\bk(s,\hbar,t_0,t_1,\dots)$
produces
\[
Z^{BGW}(s,\hbar,t_0,t_1,\dots)=Z^\Omega(s,\hbar,t_0,t_1,\dots)
\]
as required.
\end{proof}

One can prove the genus 0 part of Theorem~\ref{BGW=spin} in a different way using genus 0 topological recursion relations, which we denote TRR so as not to confuse this with spectral curve topological recursion discussed in Section~\ref{TR}.  The TRR gives an effective way to calculate the genus 0 correlators.
Define
\[ \langle \prod_{j=1}^n\tau_{(k_j)}\rangle_g^\bk:=\int_{\overline{\modm}_{g,n}}\hspace{-2mm}\bk\cdot\prod_{j=1}^n\psi_j^{(k_j)}
\]
and define a genus 0, 2-point correlator
\[ \langle \tau_{(k)}\tau_0\rangle_0^\bk:=\frac{1}{2^{k+1}(k+1)!}.
\]
Note that $\tau_{(0)}=\tau_0$ since $\psi^{(0)}=1$.
\begin{proposition}   \label{KTRR}
The genus 0 correlators $\langle \prod_{j=1}^n\tau_{(k_j)}\rangle_0^\bk$ satisfy the following TRR.  For $k_1>0$,
\begin{equation}   \label{TRRkap}
\langle \prod_{j=1}^n\tau_{(k_j)}\rangle_0^\bk=
\hspace{-4mm}\sum_{ I\subset\{4,...,n\}}\hspace{-3mm}\langle \tau_{(k_1-1)}\tau_0\prod_{j\in I}\tau_{(k_j)}\rangle_0^\bk
\langle\tau_{(k_2)} \tau_{(k_3)}\tau_{0} \prod_{j\in I^c}\tau_{(k_j)}\rangle_0^\bk.
\end{equation} 
\end{proposition}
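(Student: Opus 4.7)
The plan is to derive the recursion by a direct application of the classical genus zero topological recursion for $\psi_1$ on $\overline{\modm}_{0,n}$, combined with two auxiliary identities. The first is the recursion $\psi_1^{(k_1)} = \psi_1 \cdot \psi_1^{(k_1-1)} + \frac{1}{2^{k_1}k_1!}$, which already appeared in the proof of Theorem~\ref{thalg}. The second is the multiplicativity of the kappa polynomial under gluing: if $\phi_I:\overline{\modm}_{0,|I|+2}\times\overline{\modm}_{0,|I^c|+3}\to\overline{\modm}_{0,n}$ denotes the inclusion of the boundary divisor $D_{\{1\}\cup I\mid\{2,3\}\cup I^c}$, then $\phi_I^*\bk = \bk\boxtimes\bk$. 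This is a direct consequence of the standard additivity $\phi^*\kappa_m=\kappa_m\otimes 1 + 1\otimes \kappa_m$ of the Arbarello--Cornalba kappa classes under gluing, which propagates through the definition $\bk=\exp(\sum\sigma_i\kappa_i)$.

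Assume $n\geq 4$ (the case $n=3$ is trivial since $\psi_1=0$ on $\overline{\modm}_{0,3}$ leaves only the residual term, which matches the sole $I=\emptyset$ contribution on the right). Starting from $\int_{\overline{\modm}_{0,n}}\bk\cdot\psi_1^{(k_1)}\prod_{j\geq 2}\psi_j^{(k_j)}$, I would first replace $\psi_1^{(k_1)}$ by $\psi_1\,\psi_1^{(k_1-1)} + \frac{1}{2^{k_1}k_1!}$. Applying the classical genus zero TRR
\[
\psi_1 = \sum_{\emptyset\neq I\subset\{4,\ldots,n\}} D_{\{1\}\cup I\mid\{2,3\}\cup I^c}
\]
to the first summand and restricting each term to its boundary divisor via $\phi_I^*\bk = \bk\boxtimes\bk$ together with $\phi_I^*\psi_j=\psi_j$ on the component containing $j$, the integral factors as a product of two integrals, each with no $\psi$-class at the node and therefore interpreted as a correlator carrying a $\tau_0$ at the node. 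This produces precisely the summands indexed by non-empty $I\subset\{4,\ldots,n\}$ on the right-hand side of \eqref{TRRkap}.

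It remains to match the residual term $\frac{1}{2^{k_1}k_1!}\int_{\overline{\modm}_{0,n}}\bk\prod_{j\geq 2}\psi_j^{(k_j)}$ with the $I=\emptyset$ summand on the right. The prefactor $\frac{1}{2^{k_1}k_1!}$ equals $\langle\tau_{(k_1-1)}\tau_0\rangle_0^\bk$ by the formal definition of the unstable two-point correlator given just before the proposition; the remaining integral, carrying no insertion at the first marked point, equals $\langle\tau_0\,\tau_{(k_2)}\tau_{(k_3)}\prod_{j\in\{4,\ldots,n\}}\tau_{(k_j)}\rangle_0^\bk$ by symmetry in the labels. I expect no genuine obstacle: the only sensitive point is that the formal value assigned to $\langle\tau_{(k)}\tau_0\rangle_0^\bk$ is chosen precisely so that the unstable boundary contribution combines cleanly with the genuine boundary terms into a single sum over all $I\subset\{4,\ldots,n\}$.
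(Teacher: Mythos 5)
Your proposal is correct and follows essentially the same route as the paper: split $\psi_1^{(k_1)}=\psi_1\cdot\psi_1^{(k_1-1)}+\tfrac{1}{2^{k_1}k_1!}$, expand $\psi_1$ as a sum of boundary divisors $D_{\{1\}\cup I}$ in genus $0$, use the multiplicative restriction $\bk\mapsto\bk\boxtimes\bk$ on boundary divisors, and absorb the residual constant term as the $I=\emptyset$ summand via the formal two-point correlator. The only differences are cosmetic: you work directly with the $\psi_j^{(k_j)}$ classes and make explicit the additivity of $\kappa$-classes under gluing, whereas the paper first computes with monomials $\psi_j^{k_j}$ and then invokes linearity.
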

\begin{proof}
In genus 0 we have, for $n\geq4$
\begin{center}
\begin{tikzpicture}[scale=0.6]

\node  at (-5.5, 0) {$\displaystyle\psi_1=\hspace{-3mm}\sum_{\emptyset\neq I\subset\{4,...,n\}}\hspace{-3mm}D_{\{1,I\}}=\hspace{-3mm}\sum_{\emptyset\neq I\subset\{4,...,n\}}$};

\node (1) at (0, 0) [circle,draw] {0};
\node (4) at (2, 0) [circle,draw] {$0$};
\node (2) at (-1, 1) {};
\node (3) at (-1, -1) {};
\node (5) at (3, 1) {};
\node (6) at (3, -1) {};
\node (7) at (2.3, 1.3) {};
\node (8) at (3.3, .4) {};

\node (10) at (-1, 0) {$\quad \vdots$};
\node  at (-1.1, -.25) {$I$};
\node (11) at (2.9, 0) {$\vdots$};
\node  at (3, -.25) {$\ \quad I^c$};
\draw[-] (1) to (4);
\draw[-] (2) to (1);
\draw[-] (3) to (1);
\draw[-] (5) to (4);
\draw[-] (6) to (4);
\draw[-] (7) to (4);
\draw[-] (8) to (4);

\node (12) at (-1, 1) {1};
\node (13) at (2.2, 1.4) {2};
\node (13) at (3.1, 1) {3};
\node (14) at (3.3, .4) {};
\node (15) at (-1.2, .4) {};
\draw[-] (15) to (1);
\end{tikzpicture}
\end{center}
which is a special case of the pullback formula used in the proof of Theorem~\ref{thalg}. 
Notice that the above sum requires non-empty $I$, whereas the statement of the theorem allows empty $I$.

The class $\bk$ restricts naturally to boundary divisors.  In particular:
\begin{center}

\begin{tikzpicture}[scale=0.8]
\node (1) at (0, 0) [circle,draw] {$g_1$};
\node (4) at (2, 0) [circle,draw] {$g_2$};
\node (2) at (-1, 1) {};
\node (3) at (-1, -1) {};
\node (5) at (3, 1) {};
\node (6) at (3, -1) {};

\filldraw  (-1.5,0) circle (1pt);
\node at (-2, 0) {$\bk$};
\node (10) at (-1, 0) {$I\ \vdots$};
\node (11) at (3, 0) {$\vdots\ I^c$};
\draw[-] (1) to (4);
\draw[-] (2) to (1);
\draw[-] (3) to (1);
\draw[-] (5) to (4);
\draw[-] (6) to (4);
\end{tikzpicture}
\begin{tikzpicture}[scale=0.8]
\node (1) at (0, 0) [circle,draw] {$g_1$};
\node (4) at (2, 0) [circle,draw] {$g_2$};
\node (2) at (-1, 1) {};
\node (3) at (-1, -1) {};
\node (5) at (3, 1) {};
\node (6) at (3, -1) {};

\node at (0, .8) {$\bk$};
\node at (2, .8) {$\bk$};
\node at (-2, 0) {$=$};
\node (10) at (-1, 0) {$I\ \vdots$};
\node (11) at (3, 0) {$\vdots\ I^c$};
\draw[-] (1) to (4);
\draw[-] (2) to (1);
\draw[-] (3) to (1);
\draw[-] (5) to (4);
\draw[-] (6) to (4);
\end{tikzpicture}
\end{center}
where nodes are decorated by polynomials in kappa classes.  In the diagram above, on the left hand side $\bk\in H^*(\overline{\modm}_{g,n},\bq)$, and on the right hand side the left decorated node uses the class $\bk\in H^*(\overline{\modm}_{g_1,|I|+1},\bq)$.  The diagram represents a pushforward class into  $H^*(\overline{\modm}_{g,n},\bq)$, however we are concerned here only with top intersection classes so we consider intersections in $H^*(\overline{\modm}_{g_1,|I|+1},\bq)$.

Thus, for $k'_j=k_j-\delta_{1,j}$,
\begin{align*}
\int_{\overline{\modm}_{0,n}}\bk\cdot\prod_{j=1}^n\psi_j^{k_j}&=\int_{\overline{\modm}_{0,n}}\bk\cdot\psi_1\prod_{j=1}^n\psi_j^{k'_j}\\
&=\int_{\overline{\modm}_{0,n}}\bk\cdot\hspace{-3mm}\sum_{\emptyset\neq I\subset\{4,...,n\}}\hspace{-3mm}D_{\{1,I\}}
\prod_{j=1}^n\psi_j^{k'_j}\\
&=\hspace{-4mm}\sum_{\emptyset\neq I\subset\{4,...,n\}}\langle \tau_{k_1-1}\tau_{0}\prod_{j\in I}\tau_{k_j}\rangle_0^\bk
\langle \tau_{k_2} \tau_{k_3}\tau_{0}\prod_{j\in I^c}\tau_{k_j}\rangle_0^\bk
\end{align*}
where we have written
$\displaystyle \langle \prod_{j=1}^n\tau_{k_j}\rangle_0^\bk:=\int_{\overline{\modm}_{0,n}}\hspace{-2mm}\bk\cdot\prod_{j=1}^n\psi_j^{k_j}$.  Linearity allows us to replace each factor $\psi_j^{k_j}$ with $\psi_j^{(k_j)}=\psi_j^{k_j}+...$, to get:
\[
\int_{\overline{\modm}_{0,n}}\bk\cdot\psi_1\prod_{j=1}^n\psi_j^{(k'_j)}\\
=\hspace{-4mm}\sum_{\emptyset\neq I\subset\{4,...,n\}}\langle \tau_{(k_1-1)}\tau_{0}\prod_{j\in I}\tau_{(k_j)}\rangle_0
\langle \tau_{(k_2)} \tau_{(k_3)}\tau_{0}\prod_{j\in I^c}\tau_{(k_j)}\rangle_0.
\]
Since $\psi_1^{(k_1)}=\psi_1\cdot \psi_1^{(k_1-1)}+\frac{1}{2^{k_1}k_1!}$ we see that the extra term of $\frac{1}{2^{k_1}k_1!}$ contributes
\[\langle \tau_{(k)}\tau_0\rangle_0^\bk\langle\tau_{0} \prod_{j=2}^n\tau_{(k_j)}\rangle_0^\bk=\frac{1}{2^{k_1}k_1!}\langle\tau_{0} \prod_{j=2}^n\tau_{(k_j)}\rangle_0^\bk\]
to the sum, which corresponds to $I=\emptyset$.
\end{proof}

Recall the genus 0 spin class correlators, defined in \eqref{spincor} by:
\[\langle\prod_{i=1}^n\tau_{k_i}\rangle_0:=\frac{1}{m!}\int_{\overline{\modm}_{g,n+m}}\hspace{-5mm}\Omega_{g,n+m}^{(1^n,0^m)}.\prod_{i=1}^n\psi_i^{k_i},\quad m=2+2|k|\]
\begin{proposition}
The genus 0 spin class correlators satisfy the following genus 0 TRR.  For $k_1>0$,
\begin{equation}   \label{TRRspin}
\langle \prod_{j=1}^n\tau_{k_j}\rangle_0=\hspace{-4mm}\sum_{I\subset\{4,...,n\}}\hspace{-4mm}\langle \tau_{0}\tau_{k_1-1}\prod_{j\in I}\tau_{k_j}\rangle_0
\langle\tau_{0} \tau_{k_2} \tau_{k_3}\prod_{j\in I^c}\tau_{k_j}\rangle_0.
\end{equation} 
\end{proposition}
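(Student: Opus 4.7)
The plan is to reduce this proposition to the analogous statement for kappa correlators proven in Proposition~\ref{KTRR}, via the identification of spin and kappa correlators supplied by equation~\eqref{spink}. In genus $0$, the sum on the left of \eqref{spink} collapses to the single term $m=2+2|k|$ by the degree matching recorded after \eqref{spincor}. Therefore, for $n\geq 3$,
\[
\langle\prod_{j=1}^n \tau_{k_j}\rangle_0 = \int_{\overline{\modm}_{0,n}} \bk\cdot \prod_{j=1}^n \psi_j^{(k_j)} = \langle\prod_{j=1}^n \tau_{(k_j)}\rangle_0^\bk ,
\]
so under the correspondence $\tau_k\leftrightarrow \tau_{(k)}$ the genus $0$ spin and bracketed kappa correlators agree in the stable range.

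The next step is to pin down the unstable $n=2$ base case, namely $\langle\tau_k\tau_0\rangle_0 = 1/(2^{k+1}(k+1)!)$, which matches the value assigned to $\langle\tau_{(k)}\tau_0\rangle_0^\bk$ in Proposition~\ref{KTRR}. This value is already encoded in the proof of Theorem~\ref{BGW=spin}: the $(g,n)=(0,2)$ part of $\log Z^\Omega$ was identified with $\tfrac12\hbar^{-1}F_{0,2}$, and reading off the coefficient of $t_k t_0$ in
\[
F_{0,2} = \sum_{m_1,m_2\geq 0}\frac{2^{-(m_1+m_2+1)}}{m_1+m_2+1}\,\frac{t_{m_1}t_{m_2}}{m_1!\,m_2!}
\]
yields exactly $1/(2^{k+1}(k+1)!)$, since $\overline{\modm}_{0,2}$ is empty and so $\log Z^{(\bk)}$ contributes nothing to the $(0,2)$ sector.

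Combining these two identifications, Proposition~\ref{KTRR} applied to $(k_1,\dots,k_n)$ translates term by term into the stated recursion~\eqref{TRRspin}. An alternative self-contained route is to imitate the argument of Proposition~\ref{KTRR} directly on the spin side: expand $\psi_1$ on $\overline{\modm}_{0,n+m}$ as a sum of boundary divisors $D_S$ with $1\in S$ and $2,3\notin S$, use the restriction formula~\eqref{rest} for $\Omega^{(1^n,0^m)}$ at each $D_S$, and regroup the terms by the Neveu-Schwarz content $I_{\rm NS} := S\cap\{4,\dots,n\}$; the distribution of the $m$ Ramond points between the two sides converts the prefactor $1/m!$ into $1/(a!(m-a)!)$ through $\binom{m}{a}$, so each summand assembles into a product of two bona fide spin correlators. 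The main thing to check carefully in either approach is the $I=\emptyset$ contribution on the right of~\eqref{TRRspin}, which arises from the boundary divisors whose $S$ consists of point $1$ together with Ramond points only; this reduces precisely to the $n=2$ base case verified above, so once that calculation is in hand the recursion follows.
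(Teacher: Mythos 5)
Your proposal is correct, and your primary route is genuinely different from the paper's. The paper proves \eqref{TRRspin} directly on the spin side --- precisely the ``alternative self-contained route'' you sketch in your last paragraph: expand $\psi_1$ on $\overline{\modm}_{0,n+m}$ as $\sum D_{\{1,I\}}$ with $2,3\in I^c$, restrict $\Omega_{0,n+m}^{(1^n,0^m)}$ via \eqref{rest}, observe that homogeneity pins down the number $m_1=2+2(k_1-1+|k_I|)$ of Ramond points landing on the first component, and absorb $\binom{m}{m_1}/m!=1/(m_1!\,m_2!)$ into the two correlators, with $I=\emptyset$ allowed because $m_1>0$ forces Ramond points onto the first factor. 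Your main route instead pulls \eqref{TRRspin} back to Proposition~\ref{KTRR} through the identification $\langle\prod\tau_{k_j}\rangle_0=\langle\prod\tau_{(k_j)}\rangle_0^\bk$ supplied by \eqref{spink} for $n\geq 3$, plus the matching of the two-point base case; this is valid and non-circular, since Theorem~\ref{thalg} and the value $\langle\tau_k\tau_0\rangle_0=\tfrac{1}{2^{k+1}(k+1)!}$ (computed directly in \eqref{2point}, or already inside the proof of Theorem~\ref{thalg} as $\langle\tau_{k_1-1}\tau_0\rangle_0=(2k_1-1)!!/(2k_1)!$) are established independently of either TRR. The trade-off is worth noting: the paper's direct proof keeps \eqref{TRRspin} logically independent of Theorem~\ref{thalg}, which is exactly what lets the paper then use the pair of TRRs to give a \emph{second}, independent proof of the stable genus~$0$ part of Theorem~\ref{BGW=spin}; under your reduction that subsequent argument would collapse to a tautology, since you would be feeding the conclusion of Theorem~\ref{thalg} back in as an input. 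As a standalone proof of the proposition, however, your reduction is shorter and complete, modulo citing \eqref{2point} (rather than ``the proof of Theorem~\ref{BGW=spin}'', which itself merely quotes that calculation) for the two-point value.
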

\begin{proof}
This proof is similar to the proof of Proposition~\ref{KTRR}, although there are now an extra $m$ marked points, and the spin class $\Omega_{g,n+m}^{(1^n,0^m)}$ is homogeneous whereas $\bk$ is not.  We have:
\begin{center}
\begin{tikzpicture}[scale=0.6]

\node  at (-6.5, 0) {$\displaystyle\psi_1=\hspace{-3mm}\sum_{\emptyset\neq I\subset\{4,...,n+m\}}\hspace{-3mm}D_{\{1,I\}}=\hspace{-3mm}\sum_{\emptyset\neq I\subset\{4,...,n+m\}}$};

\node (1) at (0, 0) [circle,draw] {0};
\node (4) at (2, 0) [circle,draw] {$0$};
\node (2) at (-1, 1) {};
\node (3) at (-1, -1) {};
\node (5) at (3, 1) {};
\node (6) at (3, -1) {};
\node (7) at (2.3, 1.3) {};
\node (8) at (3.3, .4) {};

\node (10) at (-1, 0) {$\quad \vdots$};
\node  at (-1.1, -.25) {$I$};
\node (11) at (2.9, 0) {$\vdots$};
\node  at (3, -.25) {$\ \quad I^c$};
\draw[-] (1) to (4);
\draw[-] (2) to (1);
\draw[-] (3) to (1);
\draw[-] (5) to (4);
\draw[-] (6) to (4);
\draw[-] (7) to (4);
\draw[-] (8) to (4);

\node (12) at (-1, 1) {1};
\node (13) at (2.2, 1.4) {2};
\node (13) at (3.1, 1) {3};
\node (14) at (3.3, .4) {};
\node (15) at (-1.2, .4) {};
\draw[-] (15) to (1);
\end{tikzpicture}
\end{center}
Hence
\begin{align*}
\frac{1}{m!}\int_{\overline{\modm}_{0,n+m}}\hspace{-5mm}\Omega_{0,n+m}^{(1^n,0^m)}\cdot\prod_{j=1}^n\psi_j^{k_j}&=\frac{1}{m!}\int_{\overline{\modm}_{0,n+m}}\hspace{-5mm}\Omega_{0,n+m}^{(1^n,0^m)}\cdot\hspace{-5mm}\sum_{\emptyset\neq I\subset\{4,...,n+m\}}\hspace{-5mm}D_{\{1,I\}}
\prod_{j=1}^n\psi_j^{k'_j}\\
&=\hspace{-2mm}\sum_{I\subset\{4,...,n\}}\hspace{-2mm}\langle \tau_{k_1-1}\tau_{0}\prod_{j\in I}\tau_{k_j}\rangle_0
\langle \tau_{k_2} \tau_{k_3}\tau_{0}\prod_{j\in I^c}\tau_{k_j}\rangle_0.
\end{align*}
To explain the last equality, note that the homogeneity of $\Omega_{0,n+m}^{(1^n,0^m)}$ and its restrictions to boundary divisors uniquely determines the number of Ramond points.  So
 $\langle \tau_{k_1-1}\tau_{0}\prod_{j\in I}\tau_{k_j}\rangle_0$ and $\langle \tau_{k_2} \tau_{k_3}\tau_{0}\prod_{j\in I^c}\tau_{k_j}\rangle_0$ use $m_1=2+2(k_1-1+|k_I|)$, respectively $m_2=2(k_2+k_3+|k_{I^c}|)$, Ramond points and $m=m_1+m_2$.  There are $\binom{m}{m_1}$ summands on the right hand side corresponding to choosing $m_1$ Ramond points from the $m$ Ramond points, hence the last equality uses the factor $\frac{1}{m!}\binom{m}{m_1}=\frac{1}{m_1!m_2!}$ to produce the correlators on the right hand side.  Note that since $m_1>0$, so that Ramond points will always be present in the first factor, it is possible that $I=\emptyset$.  Thus we have proven \eqref{TRRspin} as required.
\end{proof}

The TRR uniquely determines $\langle \prod_{j=1}^n\tau_{(k_j)}\rangle_0^\bk$ from $\langle\tau_0\tau_0\tau_0\rangle^\bk_0=1$ together with the two-point correlators $\langle \tau_{(k)}\tau_0\rangle_0^\bk=\frac{1}{2^{k+1}(k+1)!}$.  The same TRR uniquely determines $\langle \prod_{j=1}^n\tau_{k_j}\rangle_0$ from $\langle\tau_0\tau_0\tau_0\rangle_0=1$ together with the two-point correlators $\langle \tau_{k}\tau_0\rangle_0=\frac{1}{2^{k+1}(k+1)!}$.  Hence for $n\geq 3$,
\[\langle \prod_{j=1}^n\tau_{k_j}\rangle_0=\langle \prod_{j=1}^n\tau_{(k_j)}\rangle_0^\bk
\]
which gives a different proof of the stable genus 0 part of Theorem~\ref{BGW=spin}.  The unstable part is dealt with in the same way as the proof of Theorem~\ref{BGW=spin}.

\subsection{Super volumes}
The function 
\[\widehat{V}_{g,n}\WP(s,L_1,...,L_n)\in\br[L_1,...,L_n][[s^2]]\] 
is defined by
\[\widehat{V}_{g,n}\WP(s,L_1,...,L_n):=\sum_{m=0}^\infty\frac{s^m}{m!}\int_{\overline{\modm}_{g,n+m}}\hspace{-4mm}\Omega_{g,n+m}^{(1^n,0^m)}\exp(2\pi^2\kappa_1+\frac12\sum_{i=1}^nL_i^2\psi_i).\]
As described in the introduction, it represents super Weil-Petersson volumes of the moduli spaces of super Riemann surfaces.  In this paper we use only intersection theory on $\overline{\modm}_{g,n+m}$ without the need for supergeometry.

\begin{proposition}[\cite{NorEnu}]   \label{equivconst}
The recursion \eqref{reconj} for $\widehat{V}_{g,n}\WP(s,L_1,...,L_n)$ is equivalent to the Virasoro constraints
\begin{equation}   \label{virspin}
\left((2m+1)!!\pd{}{t_m}-L_m-\frac12\hbar^{-1} s^2\delta_{m,0}\right)Z^\Omega(s,\hbar,t_0,t_1,\dots)=0,\quad m\geq0.
\end{equation}
\end{proposition}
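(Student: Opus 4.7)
The plan is to translate both sides into the same polynomial equation in the $L_i$ and repackage as a PDE on the partition function. Expand the polynomial $\widehat{V}_{g,n}^{WP}(s,L_1,\dots,L_n)\in\br[L_1,\dots,L_n][[s^2]]$ in the variables $L_i^2$; by the definition \eqref{vgns}, the coefficient of $L_1^{2d_1}\cdots L_n^{2d_n}s^m/(m!\,2^{|d|}\prod d_i!)$ is
\[
\int_{\overline{\modm}_{g,n+m}} \Omega_{g,n+m}^{(1^n,0^m)}\cdot \kappa_1^{3g-3+n+m-|d|-0}/(\cdots)! \cdot \prod_{i=1}^n\psi_i^{d_i},
\]
up to combinatorial constants coming from $\exp(2\pi^2\kappa_1)$. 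Thus the coefficients of $\widehat{V}_{g,n}^{WP}$ are precisely the $\kappa_1$-decorated spin intersection numbers that enter $\log Z^\Omega$ after the substitution $L_i^{2k}=2^k k! t_k$ combined with the $2\pi^2\kappa_1$-expansion. The first step is to make this dictionary explicit, identifying coefficient extraction in the $L_i$ with partial derivatives $\partial/\partial t_k$ at $\vec t=0$ acting on $Z^\Omega$.

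Next I would compute the relevant Laplace-type transforms of the kernels $D$ and $R$. The key calculation is
\[
\int_0^\infty x^{2a+1}\int_0^\infty y^{2b+1}D(L_1,x,y)\,dx\,dy \qquad\text{and}\qquad \int_0^\infty x^{2a+1}R(L_1,L_j,x)\,dx,
\]
expanded as polynomials in $L_1$ (and $L_j$). The standard trick, following Mirzakhani--Mulase--Safnuk and its super analogue in \cite{SWiJTG,NorEnu}, is that these expansions produce exactly the coefficients $(2i+2m+1)!!/(2i-1)!!$ and $(2i+1)!!(2j+1)!!$ appearing in the Virasoro operator $L_m$, with $D$ giving the second-order derivative terms and $R$ giving the linear (dilaton-type) terms. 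Extracting the coefficient of $L_1^{2m+1}$ on both sides of \eqref{reconj} then translates into
\[
(2m+1)!!\,\partial_{t_m}Z^\Omega - L_m Z^\Omega - \tfrac12\hbar^{-1}s^2\delta_{m,0}\,Z^\Omega = 0,
\]
where the $s^2\delta_{m,0}$ correction arises precisely from the $\delta_{1,n}s^2\delta_{0,g}L_1/2$ disk contribution at $(g,n)=(0,1)$, the only place where $s$ enters nontrivially on the right side of the recursion.

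The bookkeeping needed is to handle the $\kappa_1$ insertions on the left side of \eqref{reconj}: coefficients of higher powers $L_1^{2m+1}$ in $\widehat V_{g,n}^{WP}$ involve arbitrary powers of $\kappa_1$, not just $\psi$ monomials. This is handled via the standard Mirzakhani ``dilaton'' identity $\kappa_1 = \pi_*\psi^2$ together with the pullback formula $\psi_i = \pi^*\psi_i + D_{i,n+1}$; when combined with the restriction property \eqref{rest} of the spin classes, each $\kappa_1$ factor can be traded for an additional marked point carrying $\psi^2$, yielding a recursive consistency between $\widehat V_{g,n}^{WP}$ and $\widehat V_{g,n+1}^{WP}$ that is automatically encoded in the Virasoro relations. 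Equivalence in the reverse direction is obtained by noting that both sides agree at the base case $\widehat V_{0,3}^{WP}(s,L_1,L_2,L_3)$ (which is determined by \eqref{1point}, \eqref{2point}, the dilaton equation, and the $s=0$ specialisation already proven in \cite{NorEnu}) and that the Virasoro constraints uniquely determine $Z^\Omega$ from this data.

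The main obstacle will be the kernel computation in the second step: verifying that the explicit Laplace transforms of $D(L_1,x,y)$ and $R(L_1,L_j,x)$, after Taylor expansion in $L_1$, reproduce the exact coefficients $(2i+2m+1)!!/(2i-1)!!$ and $(2i+1)!!(2j+1)!!$ appearing in $L_m$. This is a direct generalisation of the calculation in \cite{NorEnu} for the $s=0$ case; the new feature here is only the bookkeeping of the extra Ramond points (encoded by $s$), which are inserted via the forgetful map and decouple from the kernels because $D$ and $R$ are $s$-independent, so the argument for each fixed $m$ in the $s$-expansion proceeds exactly as in \cite{NorEnu}.
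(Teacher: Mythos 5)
Your overall strategy—expand the recursion in powers of $L_1$, use the moments of the kernels $D$ and $R$, and observe that the kernels are $s$-independent so the only new $s$-dependence is the disk term producing $\frac12\hbar^{-1}s^2\delta_{m,0}$—is the same reduction to the $s=0$ machinery of \cite{NorEnu} that the paper uses. But there is a concrete gap at exactly the point where the generalisation to $s\neq 0$ needs a new input: your handling of the $\kappa_1$ insertions. The identity $\kappa_1=\pi_*\psi_{n+1}^2$, the pullback formula $\psi_i=\pi^*\psi_i+D_{\{i,n+1\}}$, and the boundary restriction property \eqref{rest} do not suffice to trade $\kappa_1$ factors multiplying $\Omega_{g,n+m}^{(1^n,0^m)}$ for extra marked points, because none of these controls the behaviour of the spin classes under the forgetful map. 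What is actually required is the relation $\Omega_{g,n+1}^{(\sigma,1)}=\psi_{n+1}\pi^*\Omega_{g,n}^{\sigma}$ (proved in \cite{NorSup} and quoted in Section 2 of the paper), which via the projection formula gives $\pi_*\bigl(\Omega_{g,n+1}^{(\sigma,1)}\psi_{n+1}^{m}\bigr)=\Omega_{g,n}^{\sigma}\kappa_m$ and hence the translation formula \eqref{kaptrans}: inserting $e^{b(\kappa)}$ is equivalent to shifting the times $t_j$. This is the single place where the $s=0$ proof of \cite{NorEnu} does not carry over formally, and it is precisely the step your proposal replaces by an incorrect mechanism; without it, your dictionary between coefficients of $\widehat{V}_{g,n}\WP$ and derivatives of $Z^\Omega$, and the ``recursive consistency'' you invoke, do not close.

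A related imprecision: the moments of $D$ and $R$ do not reproduce the bare coefficients $(2i+2m+1)!!/(2i-1)!!$ and $(2i+1)!!(2j+1)!!$ of $L_m$ directly. Because the volumes carry $e^{2\pi^2\kappa_1}$, coefficient extraction in $L_1$ yields the Virasoro constraints for the translated function $Z^\Omega_{2\pi^2\kappa_1}(s,\hbar,\vec t)=Z^\Omega(s,\hbar,t_0,t_1+2\pi^2,\dots,t_j-(-2\pi^2)^j/j!,\dots)$, i.e.\ the constraints conjugated by this shift; the $\pi^2$-dependent corrections coming from the kernel moments exactly match the shift, and only after undoing it does one obtain \eqref{virspin} for $Z^\Omega$ itself. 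That is the content of the equivalence proved in \cite{NorEnu}, and again it passes through \eqref{kaptrans}, so the missing relation above is what you need to repair both steps. Your reverse-direction uniqueness argument is fine once the forward dictionary is in place.
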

\begin{proof}
The proof in \cite{NorEnu} considers only the $s=0$ case, and generalises immediately for general $s$.  We describe the constructions here, emphasising those places where the $s=0$ proof needs to be adjusted.  For $b=(b_1,b_2,....)\in\br^\bn$ and $b(\kappa):=\sum b_j\kappa_j\in H^*(\overline{\modm}_{g,n},\br)$
define
\[
Z^\Omega_{b(\kappa)}(s,\hbar,t_0,t_1,...):=\exp\sum_{g,n}\frac{\hbar^{g-1}}{n!}\sum_{\vec{k}\in\bn^n}\sum_{m=0}^\infty\frac{s^m}{m!}\int_{\overline{\modm}_{g,n+m}}\hspace{-5mm}\Omega_{g,n+m}^{(1^n,0^m)}.\prod_{i=1}^n\psi_i^{k_i}t_{k_i}e^{b(\kappa)}.
\]
The function $Z^\Omega_{b(\kappa)}$ can be expressed as the following translation of $Z^\Omega$
\begin{equation}   \label{kaptrans}
Z^\Omega_{b(\kappa)}(s,\hbar,t_0,t_1,...)=Z^\Omega(s,\hbar,t_0,t_1+p_1(b),...,t_j+p_j(b),....)
\end{equation}
where $p_j(b)$ is a weighted homogeneous polynomial in $b$ of degree $j$ defined by
\[ 1-\exp\Big(\hspace{-1mm}-\hspace{-1mm}\sum_{j=1}^\infty b_iz^i\Big)=\sum_{j=1}^\infty p_j(b_1,...,b_j)z^j.\]

This is a shifted version of the result of Manin and Zograf \cite{MZoInv}.  The proof of \eqref{kaptrans} in the $s=0$ case---see \cite[Theorem 5.7]{NorEnu}---relies on the property 
$\Omega_{g,n+1}^{(1^{n+1})}=\psi_{n+1}\pi^*\Omega_{g,n}^{(1^n)}$.  The proof immediately generalises due to the same property 
\[\Omega_{g,n+1}^{(\sigma,1)}=\psi_{n+1}\pi^*\Omega_{g,n}^\sigma.\]   
This produces the pushforward relations
\[\pi_*(\Omega_{g,n+1}^{(\sigma,1)}\psi_{n+1}^{m})=\Omega_{g,n+1}^{(\sigma)}\kappa_m
\]
which produces \eqref{kaptrans} following the proof of \cite[Theorem 5.7]{NorEnu}.

For the super Weil-Petersson volumes, apply \eqref{kaptrans} when $b(\kappa)=2\pi^2\kappa_1$ to get
\begin{align*}\label{Zok}
Z^\Omega_{2\pi^2\kappa_1}(s,\hbar,t_0,t_1,...)&=\exp\sum_{g,n}\frac{\hbar^{g-1}}{n!}\sum_{\vec{k}\in\bn^n}\sum_{m=0}^\infty\frac{s^m}{m!}\int_{\overline{\modm}_{g,n+m}}\hspace{-5mm}\Omega_{g,n+m}^{(1^n,0^m)}.\prod_{i=1}^n\psi_i^{k_i}t_{k_i}e^{2\pi^2\kappa_1}\\
&=Z^\Omega(s,\hbar,t_0,t_1+2\pi^2,...,t_j-(-2\pi^2)^j/j!,....).
\end{align*}

By Theorem~\ref{BGW=spin} and \eqref{GBGWV}, $Z^\Omega$ satisfies Virasoro constraints.  Thus the partition function $Z^\Omega_{2\pi^2\kappa_1}(s,\hbar,t_0,t_1,...)$ satisfies the conjugation of these constraints by the specialisation of the translation \eqref{kaptrans}, and this is proven in \cite{NorEnu} to give the Stanford-Witten recursion \eqref{reconj}.  This proves the proposition.
\end{proof}

By \eqref{GBGWV}, the Virasoro constraints \eqref{virspin} do indeed hold, so an immediate corollary is a proof of Theorem 2.

\subsection{Spin class intersection calculations} \label{calc}
In this section we calculate all of the genus 0 spin class intersection numbers defined in \eqref{spincor}.  The TRR \eqref{TRRspin} uniquely determines the genus 0 $n$-point correlators, for $n\geq 3$ from 2-point correlators.  The 1-point and 2-point genus 0 correlators can be calculated via Theorem~\ref{BGW=spin} but instead we prove them directly here via algebraic geometry.  

The genus 0 part of $Z^\Omega(s,\hbar,t_0,t_1,...)$ is given by the factor $\exp(\hbar^{-1}F_0)$ and $F_0$ is decomposed into degree $n$ parts, denoted by $F_0=\sum_n\frac{1}{n!}F_{0,n}$ for
\[F_{0,n}(s,t_0,t_1,...):=\sum_{\vec{k}\in\bn^n}\langle \prod_{j=1}^n\tau_{k_j}\rangle_0\prod_{j=1}^nt_{k_j}\]
for
\[
\langle \prod_{j=1}^n\tau_{k_j}\rangle_0=\frac{1}{m!}\int_{\overline{\modm}_{0,n+m}}\hspace{-4mm}\Omega_{0,n+m}^{(1^n,0^m)}\prod_{j=1}^n\psi_j^{k_j}.
\]
\begin{lemma}
\begin{equation}  \label{1point}
 F_{0,1}=\sum_m\frac{2^{-(m+1)}}{(m+1)(2m+1)}\frac{t_{m}}{m!}s^{2m+2}
\end{equation}
\end{lemma}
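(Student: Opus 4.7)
The plan is to determine the single number $\langle\tau_k\rangle_0$ by reducing to the stable three-point correlator (where Theorem~\ref{thalg} gives a closed expression in terms of $\bk$) and descending through two applications of the dilaton equation~\eqref{dilaton}. Once $\langle\tau_k\rangle_0$ is known, the lemma follows immediately, since the expansion of $Z^\Omega$ forces the coefficient of $t_k$ in $F_{0,1}$ to carry precisely the factor $s^{2k+2}$ dictated by the degree constraint $m=2+2|k|$, so $F_{0,1}=\sum_k s^{2k+2}\langle\tau_k\rangle_0 t_k$.

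\emph{Step 1 (three-point).} Since $(g,n)=(0,3)$ is stable, the integrated relation \eqref{spink} applies and gives
\[
\langle\tau_{k_1}\tau_{k_2}\tau_{k_3}\rangle_0 \;=\; \int_{\overline{\modm}_{0,3}} \bk\cdot \prod_{j=1}^3 \psi_j^{(k_j)}.
\]
On the single point $\overline{\modm}_{0,3}$ only the degree-zero pieces survive: $\bk|_{\mathrm{pt}}=K_0=1$, and the constant term of $\psi_j^{(k_j)}=\sum_i \psi_j^{k_j-i}/(2^i i!)$ equals $1/(2^{k_j}k_j!)$. Hence $\langle\tau_{k_1}\tau_{k_2}\tau_{k_3}\rangle_0 = \prod_j(2^{k_j}k_j!)^{-1}$.

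\emph{Step 2 (dilaton descent).} Apply the dilaton~\eqref{dilaton} first with $g=0$, $n=2$, and $(k_1,k_2)=(k,0)$:
\[
\langle\tau_k\tau_0\rangle_0 \;=\; \frac{\langle\tau_0\tau_k\tau_0\rangle_0}{2(k+1)} \;=\; \frac{1}{2^{k+1}(k+1)!}.
\]
Applying \eqref{dilaton} once more with $n=1$---whose target $\overline{\modm}_{0,2k+3}$ is stable for every $k\geq 0$---yields
\[
\langle\tau_k\rangle_0 \;=\; \frac{\langle\tau_0\tau_k\rangle_0}{2k+1} \;=\; \frac{1}{(2k+1)\,2^{k+1}(k+1)!}.
\]

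\emph{Step 3 (assemble).} Substituting into $F_{0,1}=\sum_k s^{2k+2}\langle\tau_k\rangle_0 t_k$ and rewriting $\frac{1}{2^{k+1}(k+1)!}=\frac{2^{-(k+1)}}{(k+1)\,k!}$ recovers the claimed formula. The only substantive input is Step~1; the dilaton reductions are mechanical, and each is legitimate because we never push past the boundary of stability---the forbidden one-point-to-zero-point reduction, which would require the unstable $\overline{\modm}_{0,2}$, is exactly the step we stop just short of.
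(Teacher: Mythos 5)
Your proof is correct, and it takes a genuinely different route from the paper's. The paper computes $\langle\tau_m\rangle_0$ directly by algebraic geometry: it writes $\psi_1^m=\psi_1^{m-1}\sum D_I$ on $\overline{\modm}_{0,2m+3}$, uses the boundary restriction property \eqref{rest} of the spin classes together with a degree count to show that only the divisor with $I^c=\{2,3\}$ contributes, and then induces on $m$ to get $\int\psi_1^m\Omega_{0,2m+3}^{(1,2m+2)}=(2m-1)!!$. You instead start from the stable three-point correlator, which \eqref{spink} (i.e.\ Theorem~\ref{thalg}) reduces to the trivial evaluation $\prod_j(2^{k_j}k_j!)^{-1}$ on the point $\overline{\modm}_{0,3}$, and descend by two applications of the dilaton equation \eqref{dilaton}; your stability bookkeeping is right (all spaces involved are $\overline{\modm}_{0,N}$ with $N\geq3$ because the Ramond points are always present), and the arithmetic checks out against the paper's answer, e.g.\ $\frac{(2m-1)!!}{(2m+2)!}=\frac{2^{-(m+1)}}{(m+1)(2m+1)m!}$. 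The paper itself flags your route as available ("\,\eqref{TRRspin} can be indirectly used to calculate 1-point correlators, once one relates the 1-point correlators to 3-point correlators via the dilaton equation\,") but deliberately prefers the direct computation as "more fundamental." Two small caveats on your version: first, it leans on Theorem~\ref{thalg}, whose own proof already invokes the two-point value $\langle\tau_{k-1}\tau_0\rangle_0=\tfrac{1}{2^{k}k!}$ that you re-derive as an intermediate step, so your argument is less self-contained (though not circular, since Theorem~\ref{thalg} is established independently of the lemma). Second, the prefactor in \eqref{dilaton} as printed is $(2g+n+2|k|)$, which is consistent with the Virasoro constraint only when $g=0$; you only use it at $g=0$, where it equals $n+2|k|$ and both applications ($2k+2$ and $2k+1$) are correct.
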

\begin{proof}
Use the genus 0 relation for $m>0$:
\[ \psi_1^m=\psi_1^{m-1}\sum_{\underset{2,3\in I^c}{1\in I}}D_I
\]
where all marked points except for $p_1$ are Ramond points.  Then
\begin{align*}
\langle\tau_{m}\rangle_0=\frac{1}{m!}\int_{\overline{\modm}_{0,2m+3}}\hspace{-4mm}\psi_1^m\Omega_{0,2m+3}^{(1,2m+2)}
=\frac{1}{m!}\sum_{\underset{2,3\in I^c}{1\in I}}\int_{\overline{\modm}_{0,2m+3}}\hspace{-5mm}\psi_1^{m-1}D_I\Omega_{0,2m+3}^{(1,2m+2)}.
\end{align*}
The intersection number in each summand is given by
\begin{align*}
\int_{\overline{\modm}_{0,2m+1}}\hspace{-5mm}\psi_1^{m-1}D_I\Omega_{0,2m+3}^{(1,2m+2)}
=\int_{\overline{\modm}_{0,|I|+1}}\hspace{-5mm}\psi_1^{m-1}\Omega_{0,|I|+1}^{2,|I|-1}\cdot\int_{\overline{\modm}_{0,|I^c|+1}}\hspace{-5mm}\Omega_{0,|I^c|+1}^{1, |I^c|}.
\end{align*}
Now for $\sigma\in\{0,1\}^n$, 
\[\deg\Omega_{0,n}(e_\sigma)=\frac12(n+|\sigma|)-2= n-3\qquad\Leftrightarrow \qquad|\sigma|=n-2\] 
hence the second factor in the summand vanishes unless $|I^c|=2$.  So $I^c=\{2,3\}$ and only one summand is non-zero, leaving
\[\int_{\overline{\modm}_{0,2m+3}}\hspace{-4mm}\psi_1^m\Omega_{0,2m+3}^{(1,2m+2)}=\int_{\overline{\modm}_{0,2m+1}}\hspace{-4mm}\psi_1^{m-1}\Omega_{0,2m+1}^{(2,2m-1)}\cdot\int_{\overline{\modm}_{0,3}}\hspace{-4mm}\Omega_{0,3}^{(1,2)}=(2m-1)!!
\]
which uses $\langle\tau_0\rangle_0=1$, the dilaton equation \eqref{dilaton} and induction.  
Hence
\[ F_{0,1}=\sum_m\frac{s^{2m+2}}{(2m+2)!}(2m-1)!!t_m=\sum_m\frac{2^{-(m+1)}}{(m+1)(2m+1)}\frac{t_{m}}{m!}s^{2m+2}\]
as required.

Note that although the TRR \eqref{TRRspin} involves $n$-point correlators for $n\geq 3$, \eqref{TRRspin} can be indirectly used to calculate 1-point correlators, once one relates the 1-point correlators to 3-point correlators via the dilaton equation \eqref{dilaton}, or equivalently the first of the Virasoro constraints given in \eqref{GBGWV}.  The proof above gave a direct algebro-geometric proof of \eqref{1point} which is more fundamental than the proof of \eqref{TRRspin}, instead of the indirect algebraic route.

\end{proof}

\begin{lemma}
\begin{equation}  \label{2point}
F_{0,2}=\sum_{m_1,m_2}\frac{2^{-(m_1+m_2+1)}}{m_1+m_2+1}\frac{t_{m_1}t_{m_2}}{m_1!m_2!}s^{2m_1+2m_2+2}\end{equation}
\end{lemma}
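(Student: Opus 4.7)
The coefficient of $s^{2(k_1+k_2)+2}t_{k_1}t_{k_2}$ in $F_{0,2}$ is the two-point spin correlator $\langle\tau_{k_1}\tau_{k_2}\rangle_0$, so \eqref{2point} is equivalent to the closed form
\[
\langle\tau_{k_1}\tau_{k_2}\rangle_0 \;=\; \frac{2^{-(k_1+k_2+1)}}{(k_1+k_2+1)\,k_1!\,k_2!}.
\]
My plan is to derive this by bootstrapping from the values computed in \eqref{1point}, using the genus zero TRR \eqref{TRRspin} and repeated applications of the dilaton equation \eqref{dilaton}.

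The boundary case $k_1=0$ (and, by symmetry, $k_2=0$) is immediate: a single application of \eqref{dilaton} to \eqref{1point} yields $\langle\tau_0\tau_k\rangle_0 = 1/(2^{k+1}(k+1)!)$, which matches the claimed formula. For $k_1\geq 1$ I would apply \eqref{TRRspin} to the three-point correlator $\langle\tau_{k_1}\tau_{k_2}\tau_0\rangle_0$. Since the summation index set $\{4,\ldots,n\}$ is empty when $n=3$, the TRR collapses to a single product
\[
\langle\tau_{k_1}\tau_{k_2}\tau_0\rangle_0 \;=\; \langle\tau_0\tau_{k_1-1}\rangle_0 \cdot \langle\tau_0\tau_{k_2}\tau_0\rangle_0 .
\]
The right-hand factor is rewritten by one further dilaton step as $(2k_2+2)\langle\tau_{k_2}\tau_0\rangle_0$, and the three-point correlator on the left is related to the target via another dilaton application, $\langle\tau_0\tau_{k_1}\tau_{k_2}\rangle_0 = 2(k_1+k_2+1)\langle\tau_{k_1}\tau_{k_2}\rangle_0$. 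Solving the resulting linear relation produces the stated closed form.

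The main obstacle I anticipate is verifying that \eqref{TRRspin} applies cleanly in the degenerate case $n=3$ with $k_3=0$, and in particular that the single-term collapse correctly tracks the Ramond counts on each side (the Ramond point partition implicit in the correlator definition \eqref{spincor} must match up on both factors). Apart from this bookkeeping check, the remaining work is an elementary algebraic simplification.

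An alternative and more geometric route, patterned on the proof of \eqref{1point}, would expand $\psi_1 = \sum_{I\ni 1,\,2,3\in I^c} D_I$ in genus zero using reference points $p_2$ (the other Neveu-Schwarz point) and a Ramond point $p_3$, apply the restriction identity \eqref{rest}, and use a degree count to show that only strata with $|I\setminus\{1\}|=2k_1$ contribute. The combinatorial prefactor $\frac{1}{m!}\binom{m-1}{2k_1}(2k_1)!(2k_2+2)!$, with $m=2(k_1+k_2)+2$, simplifies to $\frac{k_2+1}{k_1+k_2+1}$, and together with the two-point values $\langle\tau_k\tau_0\rangle_0 = 1/(2^{k+1}(k+1)!)$ this recovers the same formula. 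Here the subtlety shifts to tracking the Neveu-Schwarz type of the newly created node and the placement of $p_3$ on the component containing $p_2$.
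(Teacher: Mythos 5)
Your proposal is correct, and your \emph{primary} route is genuinely different from the paper's. The paper proves \eqref{2point} by a direct geometric computation: it expands $\psi_1^{m_1}=\psi_1^{m_1-1}\sum D_I$ on $\overline{\modm}_{0,2|m|+4}$ with reference points $p_2$ (the other Neveu--Schwarz point) and one Ramond point, uses the degree count to isolate the strata with $2m_1$ Ramond points on the component carrying $p_1$, and tracks the resulting binomial factor $\binom{2|m|+1}{2m_1}$ explicitly --- this is precisely your ``alternative geometric route,'' including the simplification of the prefactor to $\tfrac{2m_2+2}{2|m|+2}$. Your main route instead bootstraps algebraically: the boundary values $\langle\tau_0\tau_k\rangle_0=2^{-(k+1)}/(k+1)!$ follow from \eqref{dilaton} applied to \eqref{1point}; for $k_1\geq1$ the TRR \eqref{TRRspin} at $n=3$ collapses to the single term $\langle\tau_{k_1}\tau_{k_2}\tau_0\rangle_0=\langle\tau_0\tau_{k_1-1}\rangle_0\,\langle\tau_0\tau_{k_2}\tau_0\rangle_0$, and two further dilaton steps convert both three-point correlators into two-point ones, giving $2(k_1+k_2+1)\langle\tau_{k_1}\tau_{k_2}\rangle_0=2^{-(k_1+k_2)}/(k_1!\,k_2!)$ as required. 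The subtlety you flag --- whether the $I=\emptyset$ term of \eqref{TRRspin} is legitimate --- is handled in the paper's proof of the TRR, which notes that since $m_1=2k_1>0$ the first factor always carries Ramond points, so $I=\emptyset$ is a valid stratum; and there is no circularity, since \eqref{TRRspin} and \eqref{dilaton} are both established by algebraic geometry independently of Theorem~\ref{BGW=spin}. What your route buys is that the Ramond-point distribution combinatorics is done once, inside the proof of \eqref{TRRspin}, rather than redone by hand; what the paper's route buys is a self-contained computation that does not invoke the TRR proposition at all.
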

\begin{proof}
\[\langle\tau_{m_1}\tau_{m_2}\rangle_0=\frac{1}{(2|m|+2)!}\int_{\overline{\modm}_{0,2m+4}}\hspace{-2mm}\psi_1^{m_1}\psi_2^{m_2}\Omega_{0,2|m|+4}^{(2,2|m|+2)}
\]
Via $\displaystyle\psi_1^m=\psi_1^{m-1}\sum_{\underset{2,3\in I^c}{1\in I}}D_I$ for $m>0$, the non-zero summands consist of those terms with $2m_1$ Ramond points in the $I$ component.  Hence
\begin{align*}
\int_{\overline{\modm}_{0,2m+4}}\hspace{-8mm}\psi_1^{m_1}\psi_2^{m_2}\Omega_{0,2|m|+4}^{(2,2|m|+2)}&=\binom{2|m|+1}{2m_1}\int_{\overline{\modm}_{0,2m_1+2}}\hspace{-8mm}\psi_1^{m_1-1}\Omega_{0,2m_1+2}^{(2,2m_1)}\cdot\int_{\overline{\modm}_{0,2m_2+4}}\hspace{-8mm}\psi_1^{m_2}\Omega_{0,2m_2+4}^{(2,2m_2+2)}
\end{align*}
so
\begin{align*}
\langle\tau_{m_1}\tau_{m_2}\rangle_0&=\frac{2m_2+2}{2|m|+2}\langle\tau_{m_1-1}\tau_0\rangle_0\langle\tau_0\tau_{m_2}\rangle_0=\frac{2m_2+2}{2|m|+2}\frac{1}{2^{m_1}m_1!}\frac{1}{2^{m_2+1}(m_2+1)!}\\
&=\frac{1}{2|m|+2}\frac{1}{2^{m_1}m_1!}\frac{1}{2^{m_2}m_2!}
\end{align*}
which is symmetric in $m_1$ and $m_2$.
This gives
\begin{align*}
F_{0,2}&=\sum_{m_1,m_2}\frac{2^{-(m_1+m_2+1)}}{m_1+m_2+1}\frac{t_{m_1}t_{m_2}}{m_1!m_2!}s^{2m_1+2m_2+2}.
\end{align*}

\end{proof}

For $I\subset\{1,...,n\}$, define $m_I=\{m_i\}_{i\in I}$.
\begin{proposition}
\[F_0=\sum_n\frac{1}{n!}F_{0,n}=\sum_n\sum_{\vec{m}\in\bn^n}2^{-|m|}\frac{(2|m|+n-1)!}{(2|m|+2)!n!}\prod\frac{t_{m_i}}{m_i!}s^{2|m|+2}\]
\end{proposition}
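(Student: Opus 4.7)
The plan is to reformulate the proposition as the closed-form identity
\[
\langle\prod_{i=1}^n\tau_{m_i}\rangle_0=A_n(\vec{m}):=\frac{(2|m|+n-1)!}{(2|m|+2)!\,2^{|m|}\prod_i m_i!},\qquad n\geq 1,\ \vec m\in\bn^n,
\]
and then substitute into the definition of $F_{0,n}$ (with the implicit $s^{2|m|+2}$ weighting coming from the genus $0$ part of $Z^{\Omega}$) and sum. Writing this out reproduces the right-hand side of the proposition exactly.

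I would establish the correlator identity by induction. The base cases $n=1,2$ are immediate from \eqref{1point} and \eqref{2point}: one checks that $A_1(m)=\frac{2^{-(m+1)}}{(m+1)(2m+1)m!}$ and $A_2(m_1,m_2)=\frac{2^{-(|m|+1)}}{(|m|+1)m_1!m_2!}$ match the stated coefficients. For the inductive step with $n\geq 3$ I split by $|m|$. If all $m_i=0$, iterate the dilaton equation \eqref{dilaton} starting from $\langle\tau_0^3\rangle_0=1$ to obtain $\langle\tau_0^n\rangle_0=(n-1)!/2=A_n(\vec 0)$. If some $k_i>0$, WLOG $k_1>0$, apply the TRR \eqref{TRRspin} to reduce to correlators of strictly smaller $|m|$ and invoke the inductive hypothesis. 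The formula $A_n$ then agrees with $\langle\prod\tau_{m_i}\rangle_0$ provided $A_n$ itself satisfies the TRR.

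The TRR verification is the main technical obstacle. Substituting $A$ into the TRR and cancelling the common factor $\prod k_j!/2^K$ (using $a+b=K-1$ with $a=k_1-1+K_I$ and $b=k_2+k_3+K_{I^c}$, and $|I|+|I^c|=n-3$), the identity to prove becomes
\[
\frac{(2K+n-1)!}{(2K+2)!}=2k_1\sum_{I\subset\{4,\ldots,n\}}\frac{(2a+|I|+1)!\,(2b+|I^c|+2)!}{(2a+2)!\,(2b+2)!}.
\]
I have checked this directly for $n=3,4$ and would prove it in general either by induction on $n$, splitting the sum according to whether the index $n$ lies in $I$ or $I^c$ and matching against the LHS via a Pascal-type identity, or by recognising both sides as coefficient extractions from a common generating function in $k_1,\ldots,k_n$. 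A conceptually cleaner alternative that bypasses this combinatorial identity altogether would be to derive $F_0$ directly from the genus $0$ specialisation of the Virasoro constraints \eqref{GBGWV} applied to $Z^\Omega=Z^{BGW}$ (Theorem~\ref{BGW=spin}), using \eqref{1point} and \eqref{2point} as initial data.
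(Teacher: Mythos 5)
Your reduction of the proposition to the correlator identity $\langle\prod_i\tau_{m_i}\rangle_0=A_n(\vec m)$ is correct, the base cases do match \eqref{1point} and \eqref{2point}, and the induction on $|m|$ via the TRR \eqref{TRRspin} is well-founded (both factors on the right of the TRR carry strictly smaller total $\psi$-degree, and the $|m|=0$ case follows from the dilaton equation). The problem is that the entire content of the inductive step is the displayed combinatorial identity
\[
\frac{(2K+n-1)!}{(2K+2)!}=2k_1\sum_{I\subset\{4,\ldots,n\}}\frac{(2a+|I|+1)!\,(2b+|I^c|+2)!}{(2a+2)!\,(2b+2)!},
\]
and you do not prove it: checking $n=3,4$ and naming two strategies that ``would'' work is not a proof. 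The identity is true, but it is not a standard Chu--Vandermonde evaluation --- the summand depends on $I$ both through $|I|$ and through $K_I$ (via $a=k_1-1+K_I$), so the sum does not collapse over subset sizes, and neither of your proposed strategies is visibly routine. As written, the proposal has a hole at exactly its hardest point.

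The paper sidesteps this identity entirely. Writing $\langle\prod_i\tau_{m_i}\rangle_0=2^{-|m|-1}P(m_1,\ldots,m_n)\prod_i(1/m_i!)$, it uses the TRR only qualitatively, to see that $P$ is a \emph{symmetric polynomial of degree $n-3$} in $n$ variables. Two such polynomials agreeing whenever one variable is set to $0$ differ by a multiple of $m_1\cdots m_n$, which has degree $n>n-3$, hence coincide. So $P$ is pinned down by the single relation $P(0,m_1,\ldots,m_n)=P(m_1,\ldots,m_n)\sum_i(2m_i+1)$ (a restatement of the dilaton equation \eqref{dilaton}) together with $P(m_1,m_2,m_3)\equiv 2$, and verifying that $2(2|m|+n-1)!/(2|m|+2)!$ satisfies both is a one-line computation. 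To complete your argument, either carry out the subset-sum identity in full or switch to this interpolation argument; your closing alternative via the full Virasoro constraints \eqref{GBGWV} would also be only a sketch, and it would forfeit the stated purpose of the section, namely a geometric derivation of the genus-$0$ part of $Z^{BGW}$ rather than one that leans on its integrable-systems characterisation.
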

\begin{proof}
The TRR
\[\langle \prod_{j=1}^n\tau_{k_j}\rangle_0=\hspace{-4mm}\sum_{I\subset\{4,...,n\}}\hspace{-4mm}\langle \tau_{0}\tau_{k_1-1}\prod_{j\in I}\tau_{k_j}\rangle_0
\langle\tau_{0} \tau_{k_2} \tau_{k_3}\prod_{j\in I^c}\tau_{k_j}\rangle_0\] 
implies that
\[ F_{0,n}=\sum_{\vec{m}}2^{-|m|-1}P(m_1,...,m_n)\prod\frac{t_{m_i}}{m_i!}s^{2|m|+2}
\]
where $P(m_1,...,m_n)$ is a symmetric degree $n-3$ polynomial defined recursively by
\[P(m_1,...,m_n)=m_1\hspace{-4mm}\sum_{I\subset\{4,...,n\}}\hspace{-4mm}P(m_1-1,m_I,0)P(m_2,m_3,m_{I^c},0).
\]
The initial conditions of the recursion are given by the special cases 
\[P(m_1)=\frac{1}{(m_1+1)(2m_1+1)},\qquad P(m_1,m_2)=\frac{1}{m_1+m_2+1}.\]

Instead of using the recursive definition of the polynomial $P$ directly, we use only the fact that it shows that $P$ is a symmetric degree $n-3$ polynomial.  Such a polynomial is uniquely determined by evaluation of one variable.  Hence it is uniquely determined by $P(m_1,m_2,m_3)\equiv 2$ together with
\begin{equation}  \label{eval0}
P(0,m_1,...,m_n)=P(m_1,...,m_n)\sum_{i=1}^n(2m_i+1)
\end{equation}
which is a consequence of the relation \eqref{dilaton}.  The symmetric polynomial given by 
\[ P(m_1,...,m_n)=2(2|m|+n-1)(2|m|+n-2)...(2|m|+3)=\frac{2(2|m|+n-1)!}{(2|m|+2)!}
\] 
satisfies \eqref{eval0} and the initial condition, which proves the claim.
\end{proof}

This proposition, combined with Theorem \ref{BGW=spin}, provides a new, geometric proof of the expression of the genus 0 part of the generalised Br\'ezin-Gross-Witten tau function conjectured in \cite[Conjecture 3.8]{AleCut}, for another proof see
\cite{YZ}.


\section{Spectral curve} \label{TR}

Many of the constructions in this paper can be formulated using topological recursion which gives an efficient way to package the partition functions and relations between them.

Topological recursion constructs a collection of correlators $\omega_{g,n}(p_1, \ldots, p_n)$, for $p_i\in C$ starting from a spectral curve $(C,B,x,y)$, which consists of a compact Riemann surface $C$, a symmetric bidifferential $B$ defined on $C\times C$, and meromorphic functions $x,y:C\to\bc$.   The zeros of $dx$ are required to be simple and disjoint from the zeros of $dy$.   This framework, developed by Chekhov, Eynard, and Orantin \cite{CEyHer,EOrInv}, originated from the loop equations satisfied by matrix models.  

Pictorially the recursion can be represented by the diagram below, which represents the sum  over all decompositions of a surface into a pair of pants and simpler surfaces given precisely in \eqref{EOrec}.
For $2g-2+n>0$ and $L = \{2, \ldots, n\}$, define
\begin{align}  \label{EOrec}
\omega_{g,n}(p_1,p_L)=&\sum_{\alpha}\Res_{p=\alpha}K(p_1,p) \bigg[\omega_{g-1,n+1}(p,\hat{p},p_L)\\
&\hspace{3cm}+ \mathop{\sum_{g_1+g_2=g}}_{I\sqcup J=L}^\circ \omega_{g_1,|I|+1}(p,p_I) \, \omega_{g_2,|J|+1}(\hat{p},p_J) \bigg]
\nonumber
\end{align}
where the outer summation is over the zeros $\alpha$ of $dx$.
\begin{center}
\includegraphics[scale=0.4]{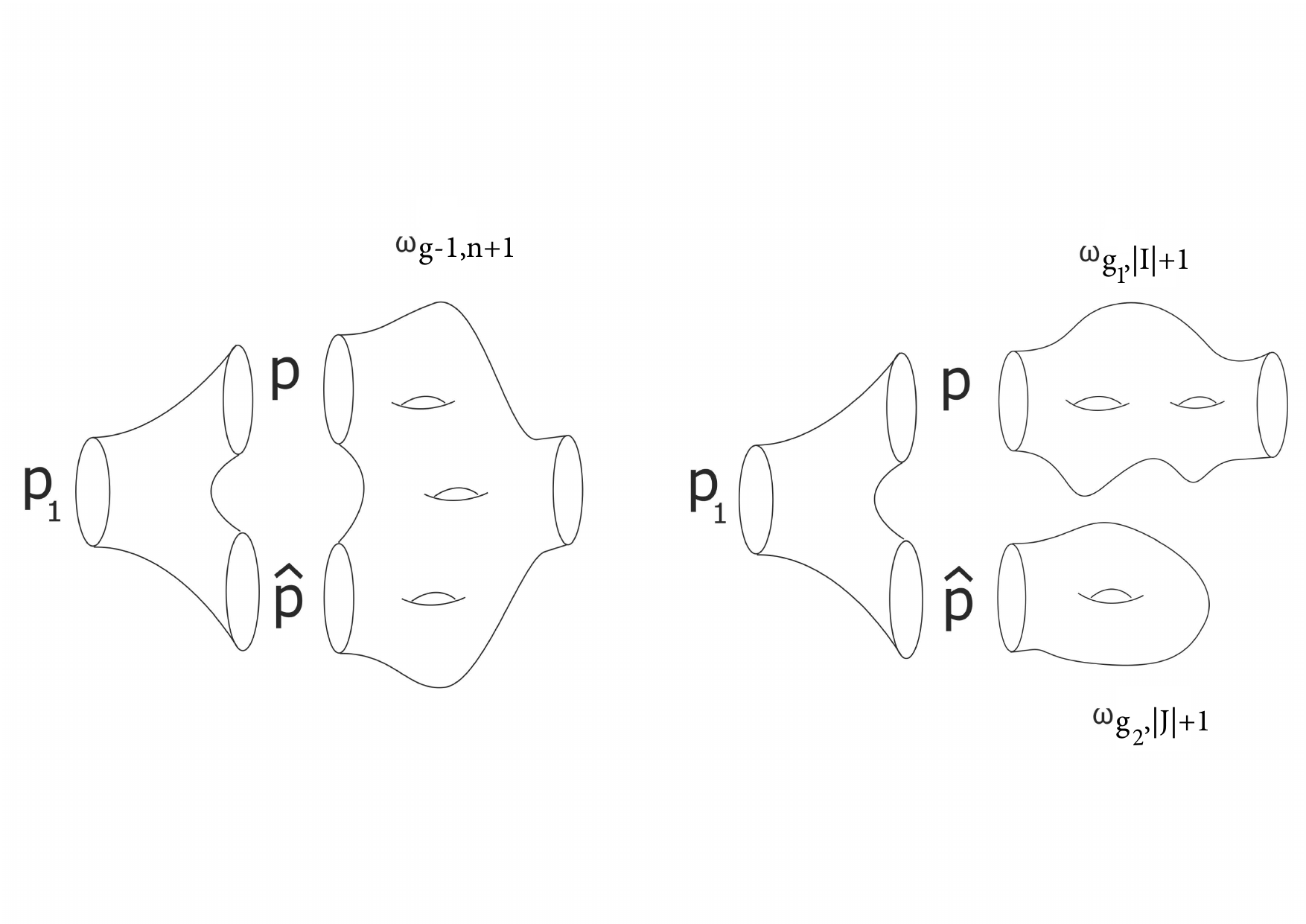}
\end{center}
We exclude terms that involve $\omega_1^0$ in the inner summation, which is denoted by an upper $\circ$.  Since the zeros of $dx$ are simple there is a local involution near any zero $\alpha$, denoted $p\mapsto\hat{p}$ which is the unique point $\hat{p}\neq p$ close to $\alpha$ satisfying $x(\hat{p})=x(p)$.  The kernel $K$ is defined by 
\[
K(p_1,p)=\frac{-\int^p_{\hat{p}}\omega_{0,2}(p_1,p')}{2[y(p)-y(\hat{p})] \, dx(p)}
\] 
which is a locally defined meromorphic function in a neighbourhood of each zero $\alpha$ of $dx$, and
\[\omega_{0,2}=B(p_1,p_2).\]
The poles of the correlator $\omega_{g,n}$ occur precisely at the zeros of $dx$.  When $C$ is rational, as will be the case in this paper, $B= \frac{d z_1 d z_2}{(z_1-z_2)^2}$, for $z_j=z(p_j)$ with respect to a rational coordinate $z$ for $C$. 

There are alternative definitions of topological recursion, equivalent to the definiiton above under certain conditions.  Among these is generalised topological recursion, which allows more general spectral curves, recently defined in \cite{ABDKSDeg}.

A fundamental example of topological recursion is given by the Airy curve
\[
C_{\text{Airy}}=\left(\bc,x=\frac{1}{2}z^2,\ y=z,\ B=\frac{dzdz'}{(z-z')^2}\right).
\]
The topological recursion correlators of $S_{\text{Airy}}$ store intersection numbers \cite{EOrTop} via
\[\omega_{g,n}^{\text{Airy}}=\sum_{\vec{k}\in\bn^n}\int_{\overline{\modm}_{g,n}}\prod_{i=1}^n\psi_i^{k_i}(2k_i+1)!!\frac{dz_i}{z_i^{2k_i+2}}.
\]
The $n=1$ correlators $\omega_{g,1}^{\text{Airy}}(z)$ are given by linear combinations of the differentials $\xi_k(z)=(2k+1)!!z^{-(2k+2)}dz$, and more generally for $2g-2+n>0$ each $\omega_{g,n}^{\text{Airy}}(z_1,...,z_n)$ is polynomial in the $\xi_k(z_i)$.  These polynomials produce the Kontsevich-Witten tau function via:
\[
Z^{\text{KW}}(\hbar,t_0,t_1,...)=\left.\exp\sum_{g,n}\frac{\hbar^{g-1}}{n!}\omega_{g,n}^{\text{Airy}}\right|_{\xi_k(z_i)=t_k}.\]
A similar story holds for the BGW tau function.  The Bessel curve \cite{DNoTop} is defined by:
\[
C_{\text{Bes}}=\left(\bc,x=\frac{1}{2}z^2,\ y=\frac{1}{z},\ B=\frac{dzdz'}{(z-z')^2}\right)
\]
and its correlators store intersection numbers \cite{CGGRel}
\[\omega_{g,n}^{\text{Bes}}=\sum_{\vec{k}\in\bn^n}\int_{\overline{\modm}_{g,n}}\Theta_{g,n}\prod_{i=1}^n\psi_i^{k_i}(2k_i+1)!!\frac{dz_i}{z_i^{2k_i+2}}
\]
which assemble to produce
\[Z^{\text{BGW}}(s=0,\hbar,t_0,t_1,...)=\left.\exp\sum_{g,n}\frac{\hbar^{g-1}}{n!}\omega_{g,n}^{\text{Bes}}\right|_{\xi_k(z_i)=t_k}.
\]

The translation of the Kontsevich-Witten tau function given in \eqref{KWtrans}, corresponds to a well-understood change \cite{CNoTop,DOSSIde} in $C_{\text{Airy}}$ which produces the following spectral curve:
\[
C_{\bk}=\left(\bc,x=\frac{1}{2}z^2,\ y=\frac{z}{z^2+s^2},\ B=\frac{dzdz'}{(z-z')^2}\right)
\]
with correlators
\[\omega_{g,n}^{\bk}=s^{4g-4+2n}\sum_{\vec{k}\in\bn^n}\int_{\overline{\modm}_{g,n}}\hspace{-2mm}\bk(s^{-2})\prod_{i=1}^n\psi_i^{k_i}(2k_i+1)!!\frac{dz_i}{z_i^{2k_i+2}}
\]
that satisfy $\displaystyle\lim_{s\to 0} C_{\bk}=C_{\text{Airy}}$ and $\displaystyle\lim_{s\to 0}\omega_{g,n}^{\bk}=\omega_{g,n}^{\text{Airy}}$.  (Note that equality of the limit of the correlators of the spectral curve with the correlators of the limit of the spectral curve is non-trivial.)

The spectral curve $C_{\bk}$ is also used to produce spin intersection numbers. They correspond to an expansion of the same correlators $\omega_{g,n}^{\bk}$ at $z=\infty$, but in a local coordinate $\eta^{-1}=(z^2+s^2)^{-\frac{1}{2}}$, namely for $2g-2+n>0$
\begin{equation}\label{omOmm}
\omega_{g,n}^{\bk}=\sum_{\vec{k}\in\bn^n}\sum_{m=0}^\infty\frac{s^m}{m!}\int_{\overline{\modm}_{g,n+m}}\hspace{-5mm}\Omega_{g,n+m}^{(1^n,0^m)}.\prod_{i=1}^n\psi_i^{k_i}(2k_i+1)!!\frac{d\eta_i}{\eta_i^{2k_i+2}}.
\end{equation}
This relation immediately follows from \eqref{KeqOm} and the well-known interpretation of the Virasoro group action in terms of the deformation of the spectral parameter. 

Equivalently, one can consider the spectral curve $C_{\bk}$ with a shifted $x$, namely $x_\Omega=x+\frac{s^2}{2}$. It satisfies the spectral curve equation 
$4x_\Omega^2y^2-2x_\Omega+s^2=0$, that was conjectured in \cite{AleCut} to be the spectral curve for topological recursion for the generalized BGW tau-function with the expansion at $x_\Omega$ in the local coordinate $x_\Omega^{-1/2}$.  


 
The super Weil-Petersson volumes of the moduli space of super curves with Neveu-Schwarz punctures is obtained by a well-understood change \cite{NorEnu} of the Bessel spectral curve to produce the spectral curve
\[ C_{NS}=\left(\bc,\ x=\frac12z^2,\ y=\frac{\cos(2\pi z)}{z},\ B=\frac{dzdz'}{(z-z')^2}\right).\]
Topological recursion applied to $C_{NS}$ generates correlators
\[\omega_{g,n}=\frac{\partial}{\partial z_1}...\frac{\partial}{\partial z_n}\cl_z\Big\{\widehat{V}_{g,n}\WP(s=0,L_1,...,L_n)\Big\}dz_1...dz_n\]
where
\[\cl_z\{P(L_1,...,L_n)\}=\int_0^\infty...\int_0^\infty P(L_1,...,L_n)\prod_{i=1}^n \exp(-z_iL_i)dL_i\] denotes the Laplace transform.
    For general $s$, we expect that a spectral curve for $\widehat{V}_{g,n}\WP$ can be obtained by deformation of the spectral curve $C_{NS}$.  We do not yet know of an expression for such a spectral curve.

\end{document}